\newtheorem{theorem}{Theorem}[section]
\newtheorem{lemma}[theorem]{Lemma}
\newtheorem{remark}[theorem]{Remark}
\newtheorem{definition}[theorem]{Definition}
\newtheorem{example}[theorem]{Example}
\numberwithin{equation}{section}
\begin{document}
\title{Variational Principles For BS Dimension of Subsets\footnote{The work is supported by the National Natural Science Foundation of China (10971100) and National Basic Research Program of China (973 Program) (2007CB814800).}}

%\footnotetext {* Corresponding author}
\author{Chenwei Wang,$^\dag$  Ercai Chen$^{\dag \ddag}$\\ 
\small \it $\dag$ School of Mathematical Science, Nanjing Normal University\\
\small \it Nanjing 210097, Jiangsu, P.R.China\\
\small \it e-mail: ecchen@njnu.edu.cn\\
\small \it $\ddag$ Center of Nonlinear Science\\
\small \it Nanjing University, Nanjing 210093, Jiangsu, P.R.China\\}
\date{}
\maketitle

\begin{center}
\begin{minipage}{120mm}
{\small {\bf Abstract.} 
\indent We redefine BS-dimension for Carath$\acute{\textrm{e}}$odory structure by packing method. We have the same dimension properties with respect to the cover method and check the Bowen's equation for the new dimension as well. Besides, we consider the relation between the new BS-dimension and upper and lower BS-density respectively. We extend the variational principles of entropy to BS dimension. }
\end{minipage}
\end{center}

\vskip0.5cm {\small{\bf Keywords and phrases} BS-dimension;
Variational principle; Bowen's equation.}\vskip0.5cm

\setcounter{equation}{0}
%%
%% Start line numbering here if you want
%%
% \linenumbers

%% main text
\section{Introduction}
 Besides the notion of Hausdorff dimension $\dim_H$, another frequently used notion of dimension is the Box dimension.
 For a totally bounded set $E$ in a metric space, its (upper) Box dimension is
 $$\overline{\dim}_BE=\limsup_{\varepsilon\rightarrow 0}\frac{\log N(E,\varepsilon)}{-\log \varepsilon},$$
 where $N(E,\varepsilon)$ denotes the largest possible number of disjoint balls of diameter $\varepsilon$ centered at points of $E$.
 However this notion suffers from the lack of associated measures. Tricot (\cite{Tricot79}, \cite{Tricot}) introduce packing dimension, which is counterpart to Hausdorff dimension, used in measuring fractal dimension of sets. Packing dimension and Hausdorff dimension have many similar natures. For example, both of them have a close relationship with the density\cite{Mattila}.

Throughout this paper, by a topological dynamical system (TDS) ($X$, $f$) we mean a compact metric space $X$ together with a continuous self-map $f:X\rightarrow X$. Let $M(X)$, $M(X, f)$ denote respectively the sets of all Borel probability measures, $f-$invariant Borel probability measures. By a measure theoretical dynamical system (m.t.d.s.) we mean $(X, \mathcal{C},\nu, f)$, where $X$ is a set, $\mathcal{C}$ is a $\sigma-$algebra over $X$, $\nu$ is a probability measure on $\mathcal{C}$ and $f$ is a measure preserving transformation.

In 1958 Kolmogorov \cite{Kolmogorov} associated to any m.t.d.s. $(X,\mathcal{C},\nu,f)$ an isomorphic invariant, namely the measure-theoretical entropy $h_{\nu}(f)$. Later on in 1965, Adler, Konheim and McAndrew \cite{Adler} introduced for any TDS $(X, f)$ an analogous notion of topological entropy $h_{top}(f)$, as an invariant of topological conjugacy. There is a basic relation between topological entropy and measure-theoretic entropy: if $(X,f)$ is a TDS, then
$$h_{top}(f)=\sup\{h_{\mu}(f):\mu \in M(X,f)\}.$$
This variational principle was proved by Goodman \cite{Goodman}, and plays a fundamental role in ergodic theory and dynamcial systems (cf. \cite{Pesin}, \cite{Walters}).

In 1973, Bowen \cite{Bowen2} introduced the topological entropy $h_{top}^{B}(f,Z)$ for any set $Z$ in a TDS $(X,f)$ in a way resembling Hausdorff dimension, which we call Bowen's topological entropy. In particular, $h_{top}^B(f,X)=h_{top}(f)$. Bowen's topological entropy plays a key role in topological dynamics and dimension theory \cite{Pesin}.

To study a nature question whether there is certain variational relation between Bowen's topological entropy and measure-theoretic entropy for arbitrary non-invariant compact set, or Borel set in general. For example, when $K\subset X$ is $f-$invariant but not compact, or $K$ is compact but not $f-$invariant, it may happen that $h_{top}^B(f,K)>0$ but $\mu(K)=0$ for any $\mu\in M(X,f)$. For this purpose, Feng and Huang \cite{Feng} defined measure-theoretic entropy for elements in $M(X)$.

In 2000, Barreira and Schmeling \cite{BS} defined a new dimension called BS-dimension with Carath$\acute{\textrm{e}}$odory structure. The BS-dimension satisfies so called Bowen pressure formula. In this paper, we consider BS-dimension in packing method and get the variational principles.

 \section{Definitions}

  \indent Let ($X$,$d$) be a compact metric space with metric $d$, $f:X \rightarrow X$ a continuous map, and $u:X\rightarrow \mathbb{R}$ a positive continuous function.
 For any $n\in \mathbb{N}$, the $n$-th Bowen metric $d_n$ on $X$ is defined by
$$d_n(x,y)=\max\{d(f^k(x),f^k(y)):k=0, \cdots, n-1\}.$$

For every $\varepsilon>0$ we denote by $B_n(x,\varepsilon)$, $\overline{B}_n(x,\varepsilon)$ the open (resp. closed) ball of radius $\varepsilon$ in the metric $d_n$ around $x$, i.e.,
$$B_n(x,\varepsilon)=\{y\in X:d_n(x,y)<\varepsilon\},$$
$$\overline{B}_n(x,\varepsilon)=\{y\in X:d_n(x,y)\leq \varepsilon\}.$$
 According to the theory of Carath$\acute{\textrm{e}}$odory dimension structure, we consider collection of sets $\mathcal{F}=\{B_n(x, \varepsilon):x\in Z, n\in \mathbb{N}, \varepsilon>0\}$, and functions
  $$\psi(B_n(x,\varepsilon))=\frac{1}{n},$$
  $$\xi(B_n(x,\varepsilon))=1,$$
  $$\eta(B_n(x,\varepsilon))=\sup\limits_{x\in B_n(x,\varepsilon)}\exp (\Sigma_{i=0}^{n-1}u(f^ix)).$$

 For $n \geq 1$, $\varepsilon>0$, we denote
 $$\mathcal{W}_n(\varepsilon)=\{B_n(x,\varepsilon):x\in Z\}.$$
 For convenience, for any $B=B_n(x,\varepsilon)\in \mathcal{W}_n(\varepsilon)$, we call the integer $n(B)=n$ the length of $B$ and $x_B=x$ the center of $B$. For any $B\in \mathcal{F}_{\varepsilon}$, function $u$ can induce a function by
 $$u(B)=\sup_{x\in B}\sum_{i=0}^{n(B)-1}u(f^ix).$$

The following dimension was first defined by Barreira and Schmeling \cite{BS}.

\begin{definition}\cite{BS} If $Z\subset X$. For any $\alpha >0$, $N\in \mathbb{N}$ and $\varepsilon>0$ we define
 $$M(Z, \alpha, \varepsilon, N)=\inf_{\mathcal{G}}\{\sum\limits_{B\in \mathcal{G}}\exp(-\alpha u(B))\},$$
where the infimum is taken over all finite or countable $\mathcal{G}\subset \cup_{j\geq N}\mathcal{W}_j(\varepsilon)$ that cover $Z$. Clearly $M(Z, \alpha, \varepsilon, N)$ is a finite out measure on $X$, and increases as $N$ increases. Define $$m(Z, \alpha, \varepsilon)=\lim_{N\rightarrow \infty}M(Z, \alpha, \varepsilon, N)$$ and
\begin{eqnarray*}
% \nonumber to remove numbering (before each equation)
  \dim_{\rm{BSC}}(Z,\varepsilon) &=& \inf\{\alpha: m(Z, \alpha, \varepsilon)=0\} \\
   &=&  \sup\{\alpha: m(Z, \alpha, \varepsilon)=\infty\}.
\end{eqnarray*}

The BS-C dimension is $\dim_{\rm{BSC}}Z=\lim\limits_{\varepsilon \rightarrow 0}\dim_{\rm{BSC}}(Z,\varepsilon)$: the limit exists because given $\varepsilon_1<\varepsilon_2$, we have $m(Z,\alpha, \varepsilon_1)\geq m(Z,\alpha, \varepsilon_2)$, so
$\dim_{\rm{BSC}}(Z,\varepsilon_1)\geq \dim_{\rm{BSC}}(Z,\varepsilon_2)$.
\end{definition}
In the theory of dimension, covering and packing are two ways to obtain dimension. Next, we define a new dimension by packings.

\begin{definition}  If $Z\subset X$. For any $\alpha >0$, $N\in \mathbb{N}$ and $\varepsilon>0$ we define
 $$P(Z, \alpha, \varepsilon, N)=\sup_{\mathcal{G}}\{\sum\limits_{B\in \mathcal{G}}\exp(-\alpha u(B))\},$$
where the supermum is taken over all finite or countable pairwise disjoint families $\{\overline{B}_{n_i}(x_i,\varepsilon)\}$ such that $x_i\in Z$, $n_i\geq N$ for all $i$.
The quantity $P(Z,\alpha, \varepsilon, N)$ does not decrease as $N$, $\varepsilon$ decrease, hence the following limits exist:
$$P^*(Z, \alpha, \varepsilon)=\lim_{N\rightarrow \infty}P(Z, \alpha, \varepsilon, N).$$
Define $$P(Z,\alpha, \varepsilon)=\inf\{\sum_{i=1}^{\infty}P^*(Z_i, \alpha, \varepsilon): \bigcup_{i=1}^{\infty}Z_i \supseteq Z\}.$$
There exists a critical value of the parameter $\alpha$, which we will denote by $\dim_{\rm{BSP}}(Z, \varepsilon), $where $P(Z, \alpha, \epsilon)$ jumps from $\infty$ to $0$, i.e.,
\begin{eqnarray*}
% \nonumber to remove numbering (before each equation)
  \dim_{\rm{BSP}}(Z,\varepsilon) &=& \inf\{\alpha: P(Z, \alpha, \varepsilon)=0\} \\
   &=& \sup\{\alpha: P(Z, \alpha, \varepsilon)=\infty\} .
\end{eqnarray*}
Note that  $\dim_{\rm{BSP}}(Z,\varepsilon)$ increases when $\varepsilon$ decreases. We call
$$\dim_{\rm{BSP}}Z=\lim_{\varepsilon \rightarrow 0}\dim_{\rm{BSP}}(Z, \varepsilon)$$
the the BS-Packing (or BS-P) dimension of $Z$.
\end{definition}

After the definitions of dimension, we consider the corresponding definitions of capacity.

\begin{definition}
If $Z\subset X$. For any $\alpha >0$, $N\in \mathbb{N}$ and $\varepsilon>0$ we define
$$R_{C}(Z, \alpha, \varepsilon, N)=\inf_{\mathcal{G}}\{\sum\limits_{B\in \mathcal{G}}\exp(-\alpha u(B))\},$$
where the infimum is taken over all finite or countable $\mathcal{G}\subset \mathcal{W}_N(\varepsilon)$ that cover $Z$. We define
$$\overline{r}_{C}(Z, \alpha, \varepsilon)=\limsup_{N\rightarrow \infty}R_{C}(Z, \alpha, \varepsilon, N)$$ and
\begin{eqnarray*}
% \nonumber to remove numbering (before each equation)
  \overline{\rm{Cap}}_{\rm{BSC}}(Z,\varepsilon) &=& \inf\{\alpha: \overline{r}_{C}(Z, \alpha, \varepsilon)=0\} \\
   &=&  \sup\{\alpha: \overline{r}_{C}(Z, \alpha, \varepsilon)=\infty\}.
\end{eqnarray*}

The BS-Capacity is $\overline{\rm{Cap}}_{\rm{BSC}}Z=\lim\limits_{\varepsilon \rightarrow 0}\overline{\rm{Cap}}_{\rm{BSC}}(Z,\varepsilon)$.
\end{definition}

\begin{definition}
If $Z\subset X$. For any $\alpha >0$, $N\in \mathbb{N}$ and $\varepsilon>0$ we define
 $$R_P(Z, \alpha, \varepsilon, N)=\sup_{\mathcal{G}}\{\sum\limits_{B\in \mathcal{G}}\exp(-\alpha u(B))\},$$
where the supermum is taken over all finite or countable pairwise disjoint families $\{\overline{B}_N(x_i,\varepsilon)\}$ such that $x_i\in Z$, for all $i$. We define
$$\overline{r}_P(Z, \alpha, \varepsilon)=\limsup_{N\rightarrow \infty}R_P(Z, \alpha, \varepsilon, N).$$
and
\begin{eqnarray*}
% \nonumber to remove numbering (before each equation)
  \overline{\rm{Cap}}_{\rm{BSP}}(Z,\varepsilon) &=& \inf\{\alpha: \overline{r}_P(Z, \alpha, \varepsilon)=0\} \\
   &=& \sup\{\alpha: \overline{r}_P(Z, \alpha, \varepsilon)=\infty\} .
\end{eqnarray*}
The BS-Capacity is $\overline{\rm{Cap}}_{\rm{BSP}}Z=\lim\limits_{\varepsilon \rightarrow 0}\overline{\rm{Cap}}_{\rm{BSP}}(Z,\varepsilon)$.
\end{definition}

The above two definitions of BS-Capacity can be defined in an alternative way. In fact, for $B_n(x, \varepsilon)\in \mathcal{F}$, we can change the function $u(B_n(x,\varepsilon))=\sup_{y\in B_n(x,\varepsilon)}\sum_{k=0}^{n-1}u(f^k y)$ to $u'(B_n(x,\varepsilon))=\sum_{k=0}^{n-1}u(f^k x)$.
$R_{C_1}(Z,\alpha,\varepsilon,N)$ and $R_{P_1}(Z,\alpha,\varepsilon,N)$ respectively instead of $R_{C}(Z, \alpha, \varepsilon, N)$ and $R_{P}(Z, \alpha, \varepsilon, N)$. $\overline{r}_{C_1}(Z, \alpha, \varepsilon)$ and $\overline{r}_{P_1}(Z, \alpha, \varepsilon)$ respectively instead of $\overline{r}_{C}(Z, \alpha, \varepsilon)$ and $\overline{r}_{P}(Z, \alpha, \varepsilon)$.
We denote $\overline{\textrm{Cap}}^1_{\rm{BSC}}Z$ and $\overline{\textrm{Cap}}^1_{\rm{BSP}}Z$ respectively the new dimension from the changed function by the Hausdorff and Packing methods.

\section{Variational Principle for BS-P Dimension}
\begin{lemma}\cite{Mattila}\label{5ball}
Let $(X,d)$ be a compact metric space and $\mathcal{B}=\{B(x_i, r_i)\}_{i\in \mathcal{I}}$ be a family of close (or open) balls in $X$. Then there exists a finite or countable subfamily $\mathcal{B}'=\{B(x_i, r_i)\}_{i\in \mathcal{I}'}$ of pairwise disjoint balls in $\mathcal{B}$ such that
$$\bigcup_{B\in \mathcal{B}}\subset \bigcup_{i\in \mathcal{I}'}B(x_i, 5r_i).$$
\end{lemma}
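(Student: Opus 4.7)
The plan is to adapt the classical stratified greedy argument. Since $X$ is compact, the radii are bounded; set $R := \sup_{i \in \mathcal{I}} r_i < \infty$. I would then partition $\mathcal{B}$ into the strata $\mathcal{B}_n := \{B(x_i,r_i) \in \mathcal{B} : R \cdot 2^{-n-1} < r_i \leq R \cdot 2^{-n}\}$ for $n = 0, 1, 2, \ldots$, so that balls inside a single stratum have comparable radii (within a factor of two).

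The construction then proceeds by induction on $n$. For $n=0$ I would pick a maximal pairwise disjoint subfamily $\mathcal{B}'_0 \subset \mathcal{B}_0$. Given $\mathcal{B}'_0, \ldots, \mathcal{B}'_{n-1}$, I would choose $\mathcal{B}'_n$ to be a maximal pairwise disjoint subfamily of those $B \in \mathcal{B}_n$ that are disjoint from every ball already selected. Setting $\mathcal{B}' := \bigcup_{n \geq 0} \mathcal{B}'_n$ gives a pairwise disjoint family by construction. To verify the covering property, take any $B = B(x,r) \in \mathcal{B}$ and locate its stratum index $n$. By maximality of $\mathcal{B}'_n$, either $B \in \mathcal{B}'_n$, or $B$ meets some previously selected ball $B(y,s) \in \mathcal{B}'_0 \cup \cdots \cup \mathcal{B}'_n$. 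In the latter case $s > R \cdot 2^{-n-1} \geq r/2$, and a direct triangle-inequality estimate yields $B(x,r) \subset B(y, s + 2r) \subset B(y, 5s)$, which is the required inclusion.

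The one step requiring genuine care is the existence and cardinality of each $\mathcal{B}'_n$. Fortunately, compactness of $X$ forces any pairwise disjoint family of balls whose radii exceed $R \cdot 2^{-n-1}$ to be finite: otherwise the centers would admit a convergent subsequence, contradicting disjointness. Hence each $\mathcal{B}'_n$ can be produced by an elementary greedy enumeration rather than an appeal to Zorn's lemma, each is finite, and $\mathcal{B}'$ is at most countable. The factor $5$ (rather than $3$) in the conclusion comes precisely from the worst-case bound $s > r/2$; one cannot generally improve it without additional assumptions on the ambient space. The argument is identical for open and closed balls since only the triangle inequality is used.
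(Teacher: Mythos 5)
Your proof is correct and is essentially the classical stratified greedy argument for the $5r$-covering lemma that Mattila gives; the paper itself cites this result from Mattila's book without reproducing a proof. One small point worth flagging: the observation that compactness forces each stratum $\mathcal{B}'_n$ to be finite (so one can avoid Zorn's lemma and get countability for free) is a nice simplification valid in this setting, though for the lemma as used later in the paper — where one applies it with the Bowen metric $d_n$ on the compact space $X$ — compactness is indeed available, so your streamlined version suffices. Also note the harmless implicit step at the very start: if some $r_i$ exceeds $\operatorname{diam}(X)$ the supremum could in principle be infinite, but then that single ball already equals $X$ and the conclusion is trivial, so one may assume $R<\infty$.
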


 In \cite{Falconer2}, a common inequality is $\dim_H Z\leq \dim_P Z\leq \overline{\dim}_B Z$. We attempt to the construct the similar inequality to compare the $\dim_{\rm{BSC}}Z$, $\dim_{\rm{BSP}}Z$, and $\overline{\textrm{Cap}}Z$. For this purpose, we need the following equivalent Capacity definition as well as the common Box dimension.
\begin{lemma}\label{equal}
$\overline{\textrm{Cap}}_{\rm{BSC}}Z=\overline{\textrm{Cap}}_{\rm{BSP}}Z=\overline{\textrm{Cap}}^1_{\rm{BSC}}Z=\overline{\textrm{Cap}}^1_{\rm{BSP}}Z$
\end{lemma}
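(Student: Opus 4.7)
The four capacities differ along two independent axes: cover (infimum) versus packing (supremum), and the sup weight $u(B)=\sup_{y\in B}S_{n(B)}u(y)$, writing $S_{N}u(y):=\sum_{k=0}^{N-1}u(f^{k}y)$, versus the center weight $u'(B)=S_{n(B)}u(x_{B})$. My plan is to treat the two axes separately: first use uniform continuity of $u$ to show $\overline{\textrm{Cap}}_{\rm{BSC}}Z=\overline{\textrm{Cap}}^{1}_{\rm{BSC}}Z$ and $\overline{\textrm{Cap}}_{\rm{BSP}}Z=\overline{\textrm{Cap}}^{1}_{\rm{BSP}}Z$, and then use a maximal $(N,\varepsilon)$-separated set to show $\overline{\textrm{Cap}}^{1}_{\rm{BSC}}Z=\overline{\textrm{Cap}}^{1}_{\rm{BSP}}Z$. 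Two scalar inputs drive everything: since $u$ is positive and continuous on compact $X$, $u_{\min}:=\min_{X}u>0$; and writing $\omega(\delta):=\sup\{|u(x)-u(y)|:d(x,y)<\delta\}$, one has $\omega(\delta)\to 0$ and, for every Bowen ball $B$ of length $N$ and radius $\delta$,
\[
0\le u(B)-u'(B)\le N\omega(\delta),\qquad u'(B)\ge Nu_{\min}.
\]

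For the first axis, $u(B)\ge u'(B)$ gives $R_{C}\le R_{C_{1}}$ and $R_{P}\le R_{P_{1}}$ at identical parameters, so $\overline{\textrm{Cap}}_{\rm{BSC}}\le\overline{\textrm{Cap}}^{1}_{\rm{BSC}}$ and likewise for packings. For the reverse inequality I would use, for any $s>0$ and any ball $B$ of length $N$ and radius $\varepsilon$,
\[
e^{-(\alpha+s)u'(B)}\le e^{-sNu_{\min}}e^{-\alpha u'(B)}\le e^{N[-su_{\min}+\alpha\omega(\varepsilon)]}e^{-\alpha u(B)},
\]
then set $s:=\alpha\omega(\varepsilon)/u_{\min}$ to kill the bracket. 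Summing over any cover (resp.\ packing) and taking $\limsup_{N}$ gives $\overline{r}_{C_{1}}(Z,\alpha+s,\varepsilon)\le\overline{r}_{C}(Z,\alpha,\varepsilon)$, which at the level of critical values reads $\overline{\textrm{Cap}}^{1}_{\rm{BSC}}(Z,\varepsilon)\le(1+\omega(\varepsilon)/u_{\min})\overline{\textrm{Cap}}_{\rm{BSC}}(Z,\varepsilon)$; sending $\varepsilon\to 0$ yields equality. The packing version is identical.

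For the second axis, working with $u'$, I would fix $\varepsilon>0$ and let $E_{N}\subset Z$ be a maximal $(N,\varepsilon)$-separated subset. Then $\{B_{N}(x,\varepsilon)\}_{x\in E_{N}}$ covers $Z$ while $\{\overline{B}_{N}(x,\varepsilon/2)\}_{x\in E_{N}}$ is pairwise disjoint, so the common sum $\sum_{x\in E_{N}}e^{-\alpha S_{N}u(x)}$ bounds $R_{C_{1}}(Z,\alpha,\varepsilon,N)$ above and $R_{P_{1}}(Z,\alpha,\varepsilon/2,N)$ below, giving $\overline{\textrm{Cap}}^{1}_{\rm{BSC}}Z\le\overline{\textrm{Cap}}^{1}_{\rm{BSP}}Z$ after $\varepsilon\to 0$. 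For the reverse, given any disjoint packing $\{\overline{B}_{N}(x_{i},\varepsilon)\}$ and any cover of $Z$ by balls $B_{N}(y_{j},\varepsilon/2)$, each $x_{i}$ lies in some $B_{N}(y_{j(i)},\varepsilon/2)$, and $i\mapsto j(i)$ is injective because disjointness of the closed packing balls forces $d_{N}(x_{i},x_{k})>\varepsilon$ for $i\ne k$, whereas two points of a single $\varepsilon/2$-ball are at $d_{N}$-distance $<\varepsilon$. Since $d_{N}(x_{i},y_{j(i)})<\varepsilon/2$ gives $|S_{N}u(x_{i})-S_{N}u(y_{j(i)})|\le N\omega(\varepsilon/2)$, summation produces $R_{P_{1}}(Z,\alpha,\varepsilon,N)\le e^{\alpha N\omega(\varepsilon/2)}R_{C_{1}}(Z,\alpha,\varepsilon/2,N)$, and the same $\alpha$-shift trick absorbs the $N$-growing factor.

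The hard part throughout will be exactly this linear-in-$N$ error, which shows up both in the weight mismatch $u(B)-u'(B)$ and in the displacement of a packing centre into the cover ball that contains it: a naive term-by-term comparison of exponentials diverges as $N\to\infty$, so the four quantities $R_{C},R_{P},R_{C_{1}},R_{P_{1}}$ cannot be compared at a single $\alpha$. The rescue is that $u'(B)\ge Nu_{\min}$ lets an additive shift $s=O(\omega(\varepsilon))$ in $\alpha$ cancel the error uniformly in $N$, and the resulting scale-dependent factor $1+O(\omega(\varepsilon))$ at the level of critical capacities collapses to $1$ as $\varepsilon\to 0$.
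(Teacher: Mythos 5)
Your proposal is correct and follows the same overall strategy as the paper: first equate the sup-weight and center-weight capacities via uniform continuity and an $\alpha$-shift that absorbs the $N\omega(\varepsilon)$ error (the paper's multiplicative factor $(1-\gamma(\varepsilon)/\underline{u})^{-1}$ is your $(1+\omega(\varepsilon)/u_{\min})$ up to a harmless adjustment), and then pass between covers and packings at the center-weight level. Where you diverge is in the cover-vs-packing step. For $\overline{\textrm{Cap}}^{1}_{\rm{BSC}}\leq\overline{\textrm{Cap}}^{1}_{\rm{BSP}}$ the paper invokes the $5r$-covering lemma, which costs a scale shift $\varepsilon\to 5.5\varepsilon$, whereas you use a maximal $(N,\varepsilon)$-separated subset of $Z$, which simultaneously produces a cover at scale $\varepsilon$ and a packing at scale $\varepsilon/2$ and is cleaner; just be careful that with the convention $d_N\geq\varepsilon$ the closed $\varepsilon/2$-balls may touch, so either take strict separation $d_N>\varepsilon$ or use radius $\varepsilon/3$. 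For $\overline{\textrm{Cap}}^{1}_{\rm{BSP}}\leq\overline{\textrm{Cap}}^{1}_{\rm{BSC}}$ both proofs inject packing centers into a cover at scale $\varepsilon/2$ (disjointness forces each cover ball to contain at most one packing center), but you correctly insert the factor $e^{\alpha N\omega(\varepsilon/2)}$ to compare $S_N u(x_i)$ with $S_N u(y_{j(i)})$ and then absorb it with the $\alpha$-shift; the paper writes the term-by-term comparison $\sum e^{-\alpha u'(B)}\leq\sum e^{-\alpha u'(F)}$ without any such correction, which is not literally valid since $u'$ is evaluated at different centers, so your version is in fact the more careful one. Net effect: your argument is marginally tighter (factor $5.5$ replaced by $2$) and repairs a small gap, but the underlying ideas are the same as the paper's.
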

\begin{proof} First we show $ \overline{\textrm{Cap}}_{\rm{BSC}}Z=\overline{\textrm{Cap}}^1_{\rm{BSC}}Z$.

Clearly, for any $\mathcal{G} \subset \mathcal{W}_N(\varepsilon) $ which covers $Z$,
$$\sum_{B\in \mathcal{G}}\exp(-\alpha u(B))\leq \sum_{B\in \mathcal{G}}\exp(-\alpha\sum_{k=0}^{N-1}u(f^k(x_B))).$$
Since $\mathcal{G}$ is arbitrary, we have
$$\overline{r}_C(Z,\alpha,\varepsilon)\leq \overline{r}_{C_1}(Z,\alpha,\varepsilon).$$
Hence
$$\overline{\textrm{Cap}}_{\rm{BSC}}Z\leq\overline{\textrm{Cap}}^1_{\rm{BSC}}Z.$$

Conversely, we still suppose $\mathcal{G}\subset \mathcal{W}_N(\varepsilon)$ which covers $Z$. Set $\underline{u}=\min_{x\in X}u(x)$ and $\gamma(\varepsilon)=\sup\{|u(x)-u(y)|: d(x,y)<2\varepsilon\}$.
We obtain
\begin{eqnarray*}
% \nonumber to remove numbering (before each equation)
  \sum_{B\in \mathcal{G}}\exp(-\alpha\sum_{k=0}^{N-1}u(f^k(x_B))) &\leq& \sum_{B\in \mathcal{G}}\exp(-\alpha u(B)+\alpha N\gamma(\varepsilon)) \\
   &=&  \sum_{B\in \mathcal{G}}\exp(-\alpha u(B)+\alpha\frac{u(B)}{\underline{u}}\gamma(\varepsilon))\\
   &=&  \sum_{B\in \mathcal{G}}\exp(-\alpha (1-\frac{\gamma(\varepsilon)}{\underline{u}})u(B)).
\end{eqnarray*}
Since $\mathcal{G}$ is arbitrary, we have
\begin{equation*}
R_{C_1}(Z,\alpha, \varepsilon, N)\leq R_{C}(Z,\alpha(1-\frac{\gamma(\varepsilon)}{\underline{u}}), \varepsilon, N).
\end{equation*}
Letting $N\rightarrow \infty$,
\begin{equation*}
\overline{r}_{C_1}(Z,\alpha, \varepsilon)\leq \overline{r}_{C}(Z,\alpha(1-\frac{\gamma(\varepsilon)}{\underline{u}}), \varepsilon).
\end{equation*}

Therefore, $$\overline{\textrm{Cap}}^1_{\rm{BSC}}(Z,\varepsilon)(1-\frac{\gamma(\varepsilon)}{\underline{u}})\leq\overline{\textrm{Cap}}_{\rm{BSC}}(Z, \varepsilon).$$
By the uniform continuity of $u$ on $X$, we conclude that
$$\overline{\textrm{Cap}}^1_{\rm{BSC}}Z\leq\overline{\textrm{Cap}}_{\rm{BSC}}Z.$$

Similarly, we have $$\overline{\textrm{Cap}}_{\rm{BSP}}Z=\overline{\textrm{Cap}}^1_{\rm{BSP}}Z.$$

Next we prove $\overline{\textrm{Cap}}^1_{\rm{BSC}}Z\geq\overline{\textrm{Cap}}^1_{\rm{BSP}}Z$.

$\forall \varepsilon>0$, $N\in \mathbb{N}$ there exists a finite or countable pairwise disjoint families $\mathcal{G}=\{\overline{B}_{N}(x_i, \varepsilon)\}$ such that $x_i \in Z$ satisfying
$$\sum_{B\in \mathcal{G}}\exp(-\alpha u'(B))>R_{P_1}(Z, \alpha, \varepsilon, N)-\varepsilon.$$

Fix $\mathcal{F}\subset \mathcal{W}_N(\varepsilon/2)$ which covers $Z$,
For any $B \in \mathcal{G}$, there exists $F\in \mathcal{F}$ such that $x_B\in F$ and every $F$ contains at most one such $x_B$.
Therefore
$$\sum_{B\in \mathcal{G}}\exp(-\alpha u'(B))\leq\sum_{F\in \mathcal{F}}\exp(-\alpha u'(F)).$$
By the arbitrariness of $\mathcal{F}$, we have
$$\sum_{B\in \mathcal{G}}\exp(-\alpha u'(B))\leq R_{C_1}(Z, \alpha, \varepsilon/2, N).$$
Hence $$R_{P_1}(Z, \alpha, \varepsilon, N)-\varepsilon \leq R_{C_1}(Z, \alpha, \varepsilon/2, N).$$
Then we have
$$\overline{\textrm{Cap}}^1_{\rm{BSP}}(Z, \varepsilon)\leq \overline{\textrm{Cap}}^1_{\rm{BSC}}(Z, \varepsilon/2).$$
Letting $\varepsilon \rightarrow 0$, thus
$$\overline{\textrm{Cap}}^1_{\rm{BSP}}Z\leq\overline{\textrm{Cap}}^1_{\rm{BSC}}Z.$$

Finally, we show $\overline{\textrm{Cap}}^1_{\rm{BSC}}Z\leq \overline{\textrm{Cap}}^1_{\rm{BSP}}Z$.
Let $\{\overline{B}_{N}(x_i, \varepsilon)\}_{i\in \mathcal{I}}$ be a family of closed ball in $X$ with centers in $Z$ which covers $Z$. According to Lemma \ref{5ball}, we can find a finite or countable subfamily $\{B'_{N}(x_i, \varepsilon)\}_{i\in \mathcal{I}'}$ of pairwise disjoint balls with centers in $Z$ still cover $Z$ after their radiuses were enlarged by 5 times.
Thus $$R_{C_1}(Z, \alpha, 5.5\varepsilon, N)\leq \sum_{B\in \mathcal{B}}\exp(-\alpha\sum_{k=0}^{N-1}u(f^k(x_B)))\leq R_{P_1}(Z, \alpha, \varepsilon, N).$$
Hence, we have $\overline{\textrm{Cap}}^1_{\rm{BSC}}Z\leq \overline{\textrm{Cap}}^1_{\rm{BSP}}Z$.\\
\end{proof}

\begin{theorem}
$\dim_{\rm{BSP}}Z\leq \overline{\textrm{Cap}}_{\rm{BSC}}Z$ for $Z\subset X$.
\end{theorem}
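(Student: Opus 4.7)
The plan is to use Lemma \ref{equal} to replace $\overline{\textrm{Cap}}_{\rm{BSC}}Z$ on the right by $\overline{\textrm{Cap}}_{\rm{BSP}}Z$, so the inequality becomes a comparison of two packing-type quantities. The only structural gap between $\dim_{\rm{BSP}}Z$ and $\overline{\textrm{Cap}}_{\rm{BSP}}Z$ is that the packings feeding $\dim_{\rm{BSP}}$ have balls of varying length $n_i\geq N$, whereas those feeding $\overline{\textrm{Cap}}_{\rm{BSP}}$ have a common length $n_i=N$. I would bridge this gap by a split-by-length inequality combined with the geometric decay granted by the positivity of $u$.

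Concretely, fix $\alpha>\overline{\textrm{Cap}}_{\rm{BSP}}Z$ and choose an intermediate $\alpha'\in(\overline{\textrm{Cap}}_{\rm{BSP}}Z,\alpha)$. For any $\varepsilon>0$ we then have $\alpha'>\overline{\textrm{Cap}}_{\rm{BSP}}(Z,\varepsilon)$, so $\overline{r}_P(Z,\alpha',\varepsilon)=0$ and in particular $R_P(Z,\alpha',\varepsilon,N)\leq 1$ for all sufficiently large $N\geq N_0$. Given any admissible family $\mathcal{G}=\{\overline{B}_{n_i}(x_i,\varepsilon)\}$ in the definition of $P(Z,\alpha,\varepsilon,N)$, I would decompose $\mathcal{G}=\bigsqcup_{M\geq N}\mathcal{G}_M$ by ball length; each $\mathcal{G}_M$ is itself a pairwise disjoint family of length-$M$ balls centred in $Z$, so it competes in $R_P(Z,\alpha,\varepsilon,M)$, yielding
\[
P(Z,\alpha,\varepsilon,N)\;\leq\;\sum_{M\geq N}R_P(Z,\alpha,\varepsilon,M).
\]

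To convert the right-hand side into a summable quantity I would exploit $\underline{u}:=\min_{X}u>0$: for any ball $B$ of length $M$, $u(B)\geq M\underline{u}$, so $\exp(-\alpha u(B))\leq \exp(-\alpha' u(B))\exp(-(\alpha-\alpha')M\underline{u})$ and therefore $R_P(Z,\alpha,\varepsilon,M)\leq e^{-(\alpha-\alpha')M\underline{u}}R_P(Z,\alpha',\varepsilon,M)\leq e^{-(\alpha-\alpha')M\underline{u}}$ for $M\geq N_0$. Summing the geometric tail shows $P(Z,\alpha,\varepsilon,N)\to 0$ as $N\to\infty$, hence $P^*(Z,\alpha,\varepsilon)=0$; the trivial single-piece decomposition $Z_1=Z$, $Z_i=\emptyset$ for $i\geq 2$ then gives $P(Z,\alpha,\varepsilon)=0$, so $\dim_{\rm{BSP}}(Z,\varepsilon)\leq\alpha$. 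Letting $\varepsilon\to 0$ and then $\alpha\downarrow \overline{\textrm{Cap}}_{\rm{BSP}}Z=\overline{\textrm{Cap}}_{\rm{BSC}}Z$ (by Lemma \ref{equal}) finishes the argument.

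The main obstacle I anticipate is the split-by-length step: one has to justify that partitioning a (possibly countable) packing into its length classes keeps each class admissible for the fixed-$N$ quantity $R_P$, and that the resulting double sum can be exchanged. The remaining estimate is a standard geometric-decay argument, but it leans essentially on $\underline{u}>0$; if $u$ were allowed to vanish, the passage from fixed-length capacity to varying-length packing would break down.
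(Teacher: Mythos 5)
Your proposal is correct and follows essentially the same route as the paper: both arguments hinge on decomposing a variable-length packing $\mathcal{G}$ into its fixed-length classes $\mathcal{G}_M$, comparing each class against the fixed-$N$ capacity quantity $R_P$, and summing a geometric series whose convergence is forced by $\underline{u}>0$. The only differences are stylistic --- the paper runs the estimate as a contradiction argument with two auxiliary parameters $t<s<\dim_{\rm{BSP}}(Z,\varepsilon)$ and an arbitrary threshold $M$, whereas you give a direct bound $P(Z,\alpha,\varepsilon,N)\leq\sum_{M\geq N}e^{-(\alpha-\alpha')M\underline{u}}\to 0$, and you invoke Lemma~\ref{equal} explicitly at the end (the paper uses it implicitly, even stating its conclusion in terms of $\overline{\textrm{Cap}}_{\rm{BSC}}$ while having worked throughout with $\overline{\textrm{Cap}}_{\rm{BSP}}$). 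The step you flagged as a potential obstacle --- partitioning a countable packing by length and exchanging the resulting double sum --- is unproblematic since every term is nonnegative, and each $\mathcal{G}_M$ is trivially admissible for $R_P(Z,\cdot,\varepsilon,M)$.
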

\begin{proof}
Assume that $\dim_{\rm{BSP}}Z>0$; otherwise there is nothing to prove.
For any $\varepsilon>0$, we assume that $t<s<\dim_{\rm{BSP}}(Z, \varepsilon)$.
If $\overline{\rm{Cap}}_{\rm{BSP}}(Z,\varepsilon)<t$. Then
$$\overline{r}_{P}(Z,t,\varepsilon)=0.$$
According to the definition of capacity, for any $M>0$, there exists $N_0$, whence $N\geq N_0$, we have
$$R_{P}(Z,t,\varepsilon,N)<M.$$
If $\mathcal{G}_n$ is an arbitrary finite pairwise disjoint families $\{\overline{B}_n(x_i,\varepsilon)\}$ such that $x_i\in Z$, $n\geq N_0$ for all $i$. Then
$$\sum_{B\in \mathcal{G}_n}\exp(-tu(B))<M.$$
Since $s<\dim_{\rm{BSP}}(Z,\varepsilon)$, there exists a finite pairwise disjoint families $\mathcal{G}=\{\overline{B}_{n_i}(x_i,\varepsilon)\}$ such that $x_i\in Z$, $n_i\geq N_0$ for all $i$ and
$$\sum_{B\in\mathcal{G}}\exp(-su(B))>\frac{M}{1-\exp((t-s)\underline{u})}.$$
We set $\mathcal{G}_k=\{B\in \mathcal{G}: n(B)=k\}$,
\begin{eqnarray*}
% \nonumber to remove numbering (before each equation)
  \sum_{B\in \mathcal{G}}\exp(-su(B)) &=& \sum_{k=0}^{\infty}\sum_{B\in \mathcal{G}_k}\exp(-su(B))\\
   &=& \sum_{k=0}^{\infty}\sum_{B\in \mathcal{G}_k}\exp((-s+t)u(B))\exp(-tu(B)) \\
   &\leq& \sum_{k=0}^{\infty}\sum_{B\in \mathcal{G}_k}\exp((-s+t)\underline{u}k)\exp(-tu(B)) \\
   &\leq& \sum_{k=0}^{\infty}\exp((-s+t)\underline{u}k)M \\
   &=& \frac{M}{1-\exp((t-s)\underline{u})}.
\end{eqnarray*}
Hence
$$\overline{\rm{Cap}}_{\rm{BSC}}(Z,\varepsilon)\geq \dim_{\rm{BSP}}(Z,\varepsilon).$$
Letting $\varepsilon\rightarrow 0$, we have
$$\overline{\rm{Cap}}_{\rm{BSC}}Z\geq \dim_{\rm{BSP}}Z.$$
\end{proof}

\begin{theorem}
 $\dim_{\rm{BSC}}Z\leq \dim_{\rm{BSP}} Z$ for $Z\subset X$.
\end{theorem}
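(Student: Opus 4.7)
The plan is to mimic the classical proof that Hausdorff-type dimension is dominated by packing-type dimension, carried out in each Bowen metric $d_N$. The core step is to establish the scale-shifted pointwise estimate $m(W,\alpha,\varepsilon)\leq P^{*}(W,\alpha,\varepsilon/5)$ for every $W\subset X$ via Lemma \ref{5ball}; countable subadditivity of the outer measure $m$ will then upgrade this to a bound involving the packing measure $P$, and sending $\varepsilon\to 0$ will finish the proof.

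To prove the pointwise estimate, I will fix $\alpha,\varepsilon>0$ and $N\in\mathbb{N}$, view $(X,d_N)$ as a metric space, and apply Lemma \ref{5ball} to the family $\{\overline{B}_N(x,\varepsilon/5):x\in Z\}$. This produces a pairwise disjoint subfamily $\{\overline{B}_N(x_i,\varepsilon/5)\}_i$ with $x_i\in Z$ whose $5r$-enlargements cover $Z$. Because $\overline{B}_N(x_i,\varepsilon/5)\subset B_N(x_i,\varepsilon)$ and $u(B)$ is defined as a supremum of the Birkhoff sum over $B$, the monotonicity $u(B_N(x_i,\varepsilon))\geq u(\overline{B}_N(x_i,\varepsilon/5))$ holds, so
$$M(Z,\alpha,\varepsilon,N)\leq\sum_i\exp\bigl(-\alpha u(B_N(x_i,\varepsilon))\bigr)\leq\sum_i\exp\bigl(-\alpha u(\overline{B}_N(x_i,\varepsilon/5))\bigr)\leq P(Z,\alpha,\varepsilon/5,N),$$
the last step because the disjoint family is an admissible length-$N$ packing at scale $\varepsilon/5$. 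Using that $M(Z,\alpha,\varepsilon,\cdot)$ is non-decreasing in $N$ while $P(Z,\alpha,\varepsilon/5,\cdot)$ is non-increasing in $N$, the inequality $M(N')\leq P(N')\leq P(N)$ for $N'\geq N$ gives $m(Z,\alpha,\varepsilon)\leq P(Z,\alpha,\varepsilon/5,N)$, and sending $N\to\infty$ yields $m(Z,\alpha,\varepsilon)\leq P^{*}(Z,\alpha,\varepsilon/5)$.

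For the upgrade from $P^{*}$ to $P$, given any countable decomposition $Z\subset\bigcup_iZ_i$, the pointwise estimate applied to each $Z_i$ combined with countable subadditivity of $m$ (a supremum of the outer measures $M(\cdot,\alpha,\varepsilon,N)$) yields $m(Z,\alpha,\varepsilon)\leq\sum_iP^{*}(Z_i,\alpha,\varepsilon/5)$; taking the infimum over decompositions gives $m(Z,\alpha,\varepsilon)\leq P(Z,\alpha,\varepsilon/5)$, hence $\dim_{\rm{BSC}}(Z,\varepsilon)\leq\dim_{\rm{BSP}}(Z,\varepsilon/5)$, and $\varepsilon\to 0$ concludes the argument.

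The main obstacle I anticipate is the open-vs.-closed bookkeeping in Lemma \ref{5ball}: the covering definition of $M$ uses open Bowen balls while the packing definition of $P$ uses closed Bowen balls, and after applying the lemma the $5r$-enlargement of a closed ball of radius $\varepsilon/5$ must be verified to sit inside the open ball of radius $\varepsilon$ with the same center. The cleanest remedy is to shrink the packing radius slightly, applying Lemma \ref{5ball} at radius $\varepsilon/5-\delta$ so that the resulting $5r$-enlargement of radius $\varepsilon-5\delta$ is strictly contained in the open ball of radius $\varepsilon$; the extra slack $\delta>0$ is arbitrary and washes out in the final $\varepsilon\to 0$ limit.
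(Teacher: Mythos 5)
Your argument is correct, but it takes a genuinely different and in fact more direct route than the paper's. You establish the sharper measure-level inequality $m(Z,\alpha,\varepsilon)\leq P^{*}(Z,\alpha,\varepsilon')$ (with $\varepsilon'$ slightly smaller than $\varepsilon/5$ to handle the open-ball cover versus closed-ball packing mismatch, a fudge that evaporates in the $\varepsilon\to 0$ limit) by applying Lemma \ref{5ball} directly in each metric $d_N$, and then pass to the packing measure $P$ by the countable subadditivity of $m$ over an arbitrary decomposition $\bigcup_i Z_i\supset Z$. The paper instead routes the inequality through the capacities: for $s>\dim_{\rm{BSP}}(Z,\varepsilon)$ it extracts a countable cover $\{Z_i\}$ with $P^{*}(Z_i,s,\varepsilon)<\infty$, deduces $\overline{\textrm{Cap}}_{\rm{BSP}}(Z_i,\varepsilon)\leq s$ from the elementary bound $R_P\leq P$, invokes the capacity equivalences of Lemma \ref{equal} together with the trivial inequality $\dim_{\rm{BSC}}\leq\overline{\textrm{Cap}}_{\rm{BSC}}$, and finishes by countable stability of $\dim_{\rm{BSC}}$. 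Both arguments ultimately rest on the Vitali $5r$-covering lemma --- you use it explicitly, the paper uses it implicitly inside the proof of Lemma \ref{equal} --- but your version is more self-contained, avoids the detour through capacity, and produces a quantitative comparison between the set functions $m$ and $P$ themselves rather than only between the critical exponents. It is the natural analogue of the classical proof that Hausdorff dimension does not exceed packing dimension, and I would say it is cleaner than the paper's proof, which leaves the dependence of the chosen cover $\{Z_i\}$ on $s$ somewhat implicit.
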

\begin{proof}
For $\varepsilon >0$, we assume that $s>\dim_{\rm{BSP}}(Z, \varepsilon)$, then $P(Z, s, \varepsilon)=0$.
If $Z\subset \cup_{i=1}^{\infty}Z_i$, such that $P^*(Z_i, s, \varepsilon)<\infty$, $\forall i$.
When $N$ is large enough, we have $P(Z_i, s, \varepsilon, N)<\infty$ and
$R_1(Z_i, s, \varepsilon, N)<\infty$.
This means $\overline{\textrm{Cap}}_{\rm{BSP}}(Z_i, \varepsilon)\leq s$,
hence,
\begin{equation*}
\overline{\textrm{Cap}}_{\rm{BSP}}(Z_i, \varepsilon)\leq \dim_{\rm{BSP}}(Z, \varepsilon), \; \forall i.
\end{equation*}
Letting $\varepsilon\rightarrow 0$, we have
$$\overline{\textrm{Cap}}_{\rm{BSP}}Z_i\leq \dim_{\rm{BSP}}Z.$$
Since BS-C dimension is countably stable, we have
\begin{eqnarray*}
% \nonumber to remove numbering (before each equation)
  \dim_{\rm{BSC}}Z &\leq& \sup_{i}\overline{\textrm{Cap}}Z_i \\
   &\leq& \dim_{\rm{BSP}}Z.
\end{eqnarray*}
\end{proof}

Barreira and Schmeling \cite{BS} have showed that BS dimension is the unique root of topological pressure function. Theorem \ref{Bowen} will show BS-P dimension the unique root of packing topological pressure function.
In the following, we give the definition of packing topological pressure.

\begin{definition}
If $Z\subset X$. For continuous $g$:$X\rightarrow \mathbb{R}$, $s\geq 0$, $N\in \mathbb{N}$ and $\varepsilon>0$, we define
\begin{equation}
P_p(g,Z,\beta,\varepsilon,N)=\sup_{\mathcal{G}}\sum_{B\in \mathcal{G}}\exp(-\beta n(B)+\sup_{x\in B}\sum_{i=0}^{n(B)-1}g(f^ix)),
\end{equation}
where the supermum is taken over all finite or countable pairwise disjoint families $\mathcal{G}=\{\overline{B}_{n_i}(x_i,\varepsilon)\}$ such that $x_i\in Z$, $n_i\geq N$ for all $i$. $P_p(g,Z,\beta,\varepsilon,N)$ decreases as $N$ increases. So limit
$$P_p^*(g,Z,\beta,\varepsilon)=\lim_{N\rightarrow \infty}P_p(g,Z,\beta,\varepsilon,N).$$
exists. We define
\begin{equation}
P_p(g,Z,\beta,\varepsilon)=\inf\{\sum_{i=0}^{\infty}P_p^*(g,Z_i,\beta,\varepsilon): \cup_{i=1}^{\infty}Z_i\supset Z\}.
\end{equation}
There exists a critical value of the parameter $\beta$, which we will denote by $P_{Z,p}(g,\varepsilon)$, where $P_p(g,Z,\beta,\varepsilon)$ jumps from $\infty$ to $0$, i.e.
\begin{equation}
P_p(g,Z,\beta,\varepsilon)=\left\{
  \begin{array}{ll}
    0, & \hbox{$\beta>P_{Z,p}(g,\varepsilon)$;} \\
    \infty, & \hbox{$\beta<P_{Z,p}(g,\varepsilon)$.}
  \end{array}
\right.
\end{equation}
We call
\begin{equation}
P_{Z,p}(g)=\lim_{\varepsilon\rightarrow 0}P_{Z,p}(g,\varepsilon)
\end{equation}
the packing topological pressure.
\end{definition}
\begin{theorem}\label{Bowen}
(Bowen pressure formula) We have $\dim_{\rm{BSP}}Z=\alpha$, where $\alpha$ is the unique root of the equation $P_{Z,p}(-\alpha u)=0$.
\end{theorem}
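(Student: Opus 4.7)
The plan is to establish a sandwich inequality comparing the packing pressure quantity $P_p(-\alpha u, Z, 0, \varepsilon)$ with the BS-P measure $P(Z,\alpha,\varepsilon)$, and then to combine it with a quantitative strict-monotonicity estimate in order to pin down $\dim_{\rm BSP} Z$ as the unique root of $P_{Z,p}(-\alpha u) = 0$. The central obstacle is that the summand inside $P_p(-\alpha u, Z, 0, \varepsilon, N)$ is built from $\sup_{y\in B}\sum(-\alpha u)(f^i y) = -\alpha\inf_{y\in B}\sum u(f^i y)$, whereas the BS-P measure involves $-\alpha u(B) = -\alpha\sup_{y\in B}\sum u(f^i y)$. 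The fix is the uniform-continuity bound $\sup - \inf \leq n\gamma(\varepsilon)$ (with $\gamma(\varepsilon)$ the modulus already used in Lemma \ref{equal}) combined with $n(B) \leq u(B)/\underline u$, which converts this sup-vs-inf mismatch into a multiplicative perturbation of $\alpha$ that vanishes as $\varepsilon \to 0$.

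More precisely, for any Bowen ball $B = B_n(x,\varepsilon)$,
\begin{equation*}
-\alpha u(B) \;\leq\; \sup_{y\in B}\sum_{i=0}^{n(B)-1}(-\alpha u)(f^i y) \;\leq\; -\alpha\left(1-\frac{\gamma(\varepsilon)}{\underline u}\right) u(B).
\end{equation*}
Exponentiating, summing over a disjoint family, taking $\sup_{\mathcal G}$, the limit $N\to\infty$, and the infimum over countable covers $\{Z_i\}$ of $Z$ all preserve both inequalities, yielding
\begin{equation*}
P(Z,\alpha,\varepsilon) \;\leq\; P_p(-\alpha u, Z, 0, \varepsilon) \;\leq\; P\!\left(Z,\; \alpha\bigl(1-\tfrac{\gamma(\varepsilon)}{\underline u}\bigr),\; \varepsilon\right).
\end{equation*}

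Using monotonicity of $P_p(-\alpha u, Z, \beta, \varepsilon)$ in $\beta$, this translates into critical-value comparisons. If $\alpha > \dim_{\rm BSP} Z$ then for sufficiently small $\varepsilon$ one still has $\alpha(1-\gamma(\varepsilon)/\underline u) > \dim_{\rm BSP}(Z,\varepsilon)$, the upper side of the sandwich vanishes, and $P_{Z,p}(-\alpha u,\varepsilon) \leq 0$; letting $\varepsilon \to 0$ gives $P_{Z,p}(-\alpha u) \leq 0$. Symmetrically, $\alpha < \dim_{\rm BSP} Z$ forces $P_{Z,p}(-\alpha u) \geq 0$. For uniqueness of the root, replacing $\alpha$ by $\alpha+\delta$ reduces $\sup_{y\in B}\sum(-\alpha u)(f^i y)$ by at least $\delta\underline u\cdot n(B)$, so $P_p(-(\alpha+\delta) u, Z, \beta, \varepsilon) \leq P_p(-\alpha u, Z, \beta+\delta\underline u, \varepsilon)$ and hence $P_{Z,p}(-(\alpha+\delta)u) \leq P_{Z,p}(-\alpha u) - \delta\underline u$. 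Thus $\alpha \mapsto P_{Z,p}(-\alpha u)$ is strictly decreasing with a quantitative gap, admits at most one zero, and the above one-sided bounds force that zero to coincide with $\dim_{\rm BSP} Z$.
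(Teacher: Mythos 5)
Your proof has the same two-part skeleton as the paper's --- first a quantitative monotonicity estimate for $\alpha\mapsto P_{Z,p}(-\alpha u)$ obtained from $u\geq\underline u>0$, then a comparison between $P_p(-\alpha u,Z,0,\varepsilon)$ and the BS-P premeasure $P(Z,\alpha,\varepsilon)$ --- but you are more careful in the second part, and in a way that matters. The paper asserts the exact identity $P_p^*(-\alpha u,A,0,\varepsilon)=P^*(A,\alpha,\varepsilon)$; however, the left-hand side is built from $\sup_{y\in B}\sum_{i}(-\alpha u)(f^iy)=-\alpha\inf_{y\in B}\sum_i u(f^iy)$ while the right-hand side uses $-\alpha u(B)=-\alpha\sup_{y\in B}\sum_i u(f^iy)$, so for $\alpha\geq 0$ one only gets $P_p^*\geq P^*$. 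That one-sided inequality suffices to show $\alpha<\dim_{\rm BSP}Z\Rightarrow P_{Z,p}(-\alpha u)\geq 0$, but the reverse implication, which the paper dispatches with ``by the same reason,'' genuinely requires the opposite bound. Your sandwich
\[
P(Z,\alpha,\varepsilon)\;\leq\; P_p(-\alpha u,Z,0,\varepsilon)\;\leq\; P\bigl(Z,\alpha(1-\gamma(\varepsilon)/\underline u),\varepsilon\bigr),
\]
derived from $\sup-\inf\leq n\gamma(\varepsilon)$ together with $n\leq u(B)/\underline u$ exactly as in Lemma \ref{equal}, supplies the missing half and is the correct argument. Two things you leave tacit and should state: the factor $1-\gamma(\varepsilon)/\underline u$ must be positive, which holds for $\varepsilon$ small by uniform continuity of $u$; and the sign analysis assumes $\alpha\geq 0$, which is harmless since BS-P dimension is nonnegative. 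Your uniqueness step via $P_{Z,p}(-(\alpha+\delta)u)\leq P_{Z,p}(-\alpha u)-\delta\underline u$ is the same as the paper's monotonicity lemma and is sound; combined with the two one-sided implications it correctly pins the unique root at $\dim_{\rm BSP}Z$.
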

\begin{proof}
For any $\varepsilon>0$, fix $\beta>0$, $N\in \mathbb{N}$, $t\in \mathbb{R}$, $h>0$. For any $n\in \mathbb{N}$ and $x\in Z$, we have
$$\frac{\sum_{k=0}^{n-1}(-u(f^kx))}{n}\leq -\underline{u}<0.$$
Then, for any $A\subset X$,
\begin{eqnarray*}
% \nonumber to remove numbering (before each equation)
  &&P_{p}(-(t+h)u,A,\beta,\varepsilon,N) \\
  &=& \sup_{\mathcal{G}}\Big\{\sum_{B\in \mathcal{G}}\exp\big(-\beta n(B)+\sup_{x\in B}\sum_{i=0}^{n(B)-1}-(t+h)u(f^ix)\big)\Big\} \\
   &\leq&  \sup_{\mathcal{G}}\Big\{\sum_{B\in \mathcal{G}}\exp\big(-\beta n(B)+\sup_{x\in B}\sum_{i=0}^{n(B)-1}-hu(f^ix)+\sup_{x\in B}\sum_{i=0}^{n(B)-1}-tu(f^ix)\big)\Big\}\\
   &\leq& \sup_{\mathcal{G}}\Big\{\sum_{B\in \mathcal{G}}\exp\big(- n(B)(\beta+h\underline{u})+\sup_{x\in B}\sum_{i=0}^{n(B)-1}-tu(f^ix)\big)\Big\} \\
   &=&   P_{p}(-tu,A,\beta+h\underline{u},\varepsilon,N).
\end{eqnarray*}
Letting $N\rightarrow \infty$, we have
$$P_{p}^*(-(t+h)u,A,\beta,\varepsilon)\leq P_{p}^*(-tu,A,\beta+h\underline{u},\varepsilon).$$
Hence
$$P_{p}(-(t+h)u,Z,\beta,\varepsilon)\leq P_{p}(-tu,Z,\beta+h\underline{u},\varepsilon).$$
Furthermore
\begin{eqnarray*}
% \nonumber to remove numbering (before each equation)
  P_{Z,p}(-(t+h)u,\varepsilon) &=& \sup\{\beta:P_{p}(-(t+h)u,Z,\beta,\varepsilon)=\infty\} \\
   &\leq& \sup\{\beta:P_{p}(-tu,Z,\beta+h\underline{u},\varepsilon)=\infty\}\\
   &=& \sup\{\beta+h\underline{u}:P_{p}(-tu,Z,\beta+h\underline{u},\varepsilon)=\infty\}-h\underline{u} \\
   &=& P_{Z,p}(-tu,\varepsilon)-h\underline{u}.
\end{eqnarray*}
By the arbitrariness of $\varepsilon$, $P_{Z,p}(-tu)$ strictly decreases as $t$ increases.

Suppose $\alpha$ is the root of $P_{Z,p}(-\alpha u)=0$, for any $A\subset X$, we have
 $$P_p^*(-\alpha u,A,0,\varepsilon)=P^*(A,\alpha,\varepsilon).$$
According to the definition of dimension, we also have
$$P_p(-\alpha u,Z,0,\varepsilon)=P(Z,\alpha,\varepsilon).$$
If $\alpha<\dim_{\rm{BSP}}Z$, there exists $\varepsilon_0$, as long as $0<\varepsilon<\varepsilon_0$, we have $\alpha<\dim_{\rm{BSP}}(Z,\varepsilon)$. Hence $$P_p(-\alpha u,Z,0,\varepsilon)=P(Z,\alpha,\varepsilon)=\infty.$$
Therefore $$P_{Z,p}(-\alpha u,\varepsilon)\geq 0,$$
and
$$P_{Z,p}(-\alpha u)\geq 0.$$

If $\alpha>\dim_{\rm{BSP}}Z$, we get $P_{Z,p}(-\alpha u)\leq 0$ by the same reason.

\end{proof}

In \cite{BS}, we can see that if $J$ is a repeller of a topologically mixing $C^1$ expanding map $f$ such that $f$ is conformal on $J$, then for every subset $Z\subset J$ (not necessarily compact or $f$-invariant), we have $\dim_HZ=s$, where $s$ is the unique root of the equation $P_Z(-s\log a)=0$. If we can verify that in symbolic system corresponding dimensions by packing methods are still hold, our dimension $\dim_{\rm{BSP}}$ will be more meaningful.

Let us recall the definition of Packing dimension. If $s\geq 0$, and $\delta>0$, let
\begin{eqnarray*}
% \nonumber to remove numbering (before each equation)
  \mathcal{P}^{\alpha}_{\delta}(Z) &=& \sup\{\sum_{i}|B_i|^{\alpha}:\{B_i\} \textrm{are disjoint closed balls with centers in $Z$ and} \\
  && \textrm{  radium not more than}\, \delta\}.
\end{eqnarray*}
Since $\mathcal{P}^{\alpha}_{\delta}(Z)$ decreases when $\delta$ decreases, $\mathcal{P}^{\alpha}_{0}(Z)=\lim_{\delta \rightarrow 0}\mathcal{P}^{\alpha}_{\delta}(Z)$ exists. The $\alpha$-packing measure is defined by
\begin{equation*}
    \mathcal{P}^{\alpha}(Z)=\inf\{\sum_{i}\mathcal{P}^{\alpha}_{0}(Z_i):Z\subset \cup_{i=1}^{\infty}F_i\}.
\end{equation*}
The Packing dimension of $Z$ is defined by
$$\dim_{P}Z=\sup\{s:\mathcal{P}^{\alpha}(Z)=\infty\}=\sup\{s:\mathcal{P}^{\alpha}(Z)=\infty\}.$$

In the following we fix a finite alphabet $\{1,\cdots,L\}$, and endow the sequence space $\Omega=\{1,\cdots,L\}^{\mathbb{N}_0}$ with the usual product topology. Denote the left shift on $\Omega$ by $T$. For $\omega\in \Omega$, let
$$\mathcal{C}_n(\omega)=\{\omega'\in \Omega:\omega'(i)=\omega(i) \textrm{for all} 0\leq i<n\}  \; (n \in \mathbb{N}).$$

 Given a strictly positive continuous function $u: \Omega \rightarrow \mathbb{R}$, associate with it a functional metric $[u]$ on $\Omega$ defined by
\begin{equation*}
    [u](\omega,\omega')=\exp(-S^*_nu(\omega)),
\end{equation*}
where $n=\min\{i\in \mathbb{N}_0:\omega(i)\neq \omega'(i)\}$ and
$$S^*_nu(\omega)=\min_{\omega'\in \mathcal{C}_n(\omega)}S_nu(\omega') \;(\omega\in \Omega, n\in \mathbb{N}).$$

\begin{theorem}
In symbolic system, $u:\Omega \rightarrow \mathbb{R}$ is a strictly positive continuous function, $Z\subset \Omega$, then $\dim_{\rm{BSP}}Z=\dim_PZ$, where $\dim_PZ$ depends on the metric $[u]$.
\end{theorem}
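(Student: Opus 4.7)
The plan is to identify both $\dim_{\rm BSP}Z$ and $\dim_PZ$ (in the metric $[u]$) with a common critical exponent defined on packings of $Z$ by cylinders in $\Omega$, and then to match the two weighting schemes $\exp(-\alpha u(B))$ and $(\mathrm{diam}_{[u]}B)^{\alpha}$ up to an error controlled by the modulus of continuity of $u$.

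First I would reduce all balls to cylinders. Equip $\Omega$ with the usual product metric. For any $\varepsilon\in(0,1)$ there is an integer $k(\varepsilon)$ such that the Bowen ball $B_n(\omega,\varepsilon)$ equals the cylinder $\mathcal{C}_{n+k(\varepsilon)}(\omega)$. Separately, because $S_n^{*}u(\omega)$ depends only on $\omega(0),\ldots,\omega(n-1)$ and $u>0$ on the compact space $\Omega$, a short computation shows that the closed $[u]$-ball $\overline{B}^{[u]}(\omega,r)$ is itself a cylinder $\mathcal{C}_{N(\omega,r)}(\omega)$ and that $\mathrm{diam}_{[u]}\mathcal{C}_n(\omega)=\exp(-S_n^{*}u(\omega))$. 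Hence in both definitions, disjoint ball families around points of $Z$ are exactly disjoint cylinder families with centers in $Z$.

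Next I would compare the two weights on a common cylinder $\mathcal{C}_n(\omega)$. The packing weight is $\exp(-\alpha S_n^{*}u(\omega))$, while the BS-P weight, via the identification, has the form $\exp(-\alpha\sup_{y\in\mathcal{C}_n(\omega)}S_{n-k(\varepsilon)}u(y))$. Both $S_n^{*}u(\omega)$ and $\sup_{y\in\mathcal{C}_n(\omega)}S_nu(y)$ differ from $S_nu(\omega)$ by at most $n\gamma(\varepsilon)$, where $\gamma(\varepsilon)$ is the modulus of continuity of $u$ and $\gamma(\varepsilon)\to 0$ as $\varepsilon\to 0$. Using $n\le S_nu(\omega)/\underline{u}$, this yields a multiplicative pinching of the weights of the form
\[
\exp\bigl(-\alpha(1+\gamma(\varepsilon)/\underline{u})S_n^{*}u(\omega)\bigr)\le\exp(-\alpha u(B))\le\exp\bigl(-\alpha(1-\gamma(\varepsilon)/\underline{u})S_n^{*}u(\omega)\bigr),
\]
which is precisely the rescaling argument already performed inside the proof of Lemma \ref{equal}.

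Finally, applying the two-sided bound termwise to the sums defining $P^{*}(Z,\alpha,\varepsilon)$ and $\mathcal{P}^{\alpha}_{\delta(\varepsilon)}(Z)$, and then taking the (identical) countable-stability infima used in the definitions of $P(Z,\alpha,\varepsilon)$ and $\mathcal{P}^{\alpha}(Z)$, gives
\[
\dim_{\rm BSP}(Z,\varepsilon)\bigl(1-\gamma(\varepsilon)/\underline{u}\bigr)\le\dim_PZ\le\dim_{\rm BSP}(Z,\varepsilon)\bigl(1-\gamma(\varepsilon)/\underline{u}\bigr)^{-1}.
\]
Letting $\varepsilon\to 0$ and invoking the uniform continuity of $u$ yields $\dim_{\rm BSP}Z=\dim_PZ$. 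The main obstacle I expect is the second step: tracking simultaneously the additive shift between the Bowen length $n$ and the cylinder length $n+k(\varepsilon)$, and the $\sup$/$\min$ discrepancy between the BS-P weight and the $[u]$-diameter. Both are absorbed by the bound $n\gamma(\varepsilon)\le(\gamma(\varepsilon)/\underline{u})S_nu(\omega)$ combined with the $\lim_{N\to\infty}$ in $P^{*}$, exactly as in Lemma \ref{equal}.
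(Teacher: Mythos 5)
Your plan follows the same strategy as the paper's argument for $\dim_PZ\le\dim_{\rm BSP}Z$: identify $[u]$-balls and Bowen balls with cylinders and compare the two weights by absorbing a modulus-of-continuity error into a rescaling $\alpha\mapsto\alpha(1\pm\gamma(\varepsilon)/\underline u)$ that collapses as $\varepsilon\to 0$, exactly as in Lemma \ref{equal}. However, the central estimate in your Step 2 is incorrect as written. You assert that $S_n^{*}u(\omega)=\min_{\omega'\in\mathcal{C}_n(\omega)}S_nu(\omega')$ and $\sup_{y\in\mathcal{C}_n(\omega)}S_nu(y)$ both differ from $S_nu(\omega)$ by at most $n\gamma(\varepsilon)$. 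The modulus $\gamma(\varepsilon)=\sup\{|u(x)-u(y)|:d(x,y)<2\varepsilon\}$ controls $|u(T^j\omega)-u(T^j\omega')|$ only when $d(T^j\omega,T^j\omega')<2\varepsilon$; for $\omega'\in\mathcal{C}_n(\omega)$ this is guaranteed for $j\le n-k(\varepsilon)$ but can fail for the last $k(\varepsilon)$ indices, where $T^j\omega$ and $T^j\omega'$ share only a handful of coordinates and may be far apart in $\Omega$. Put differently, $\mathcal{C}_n(\omega)$ is the Bowen ball $\overline{B}_{n-k(\varepsilon)}(\omega,\varepsilon)$, not $\overline{B}_n(\omega,\varepsilon)$, so $\gamma(\varepsilon)$ bounds the Birkhoff sums only up to length $n-k(\varepsilon)$; the true bound is $n\gamma(\varepsilon)+C(\varepsilon)$ with an $n$-independent constant $C(\varepsilon)\approx k(\varepsilon)\max_{x}u(x)$. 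This constant is harmless (it only inserts a prefactor $e^{\pm\alpha C(\varepsilon)}$ into the termwise comparison, which cannot move the critical exponent after the $N\to\infty$ limit in $P^{*}$), so your conclusion survives once you track it, but the pinching you wrote down is not literally valid. The paper avoids the issue by inscribing a Bowen ball $B'=\overline{B}_{n-k(\varepsilon)}(\omega,\varepsilon)\subset\mathcal{C}_n(\omega)$ and applying $\gamma(\varepsilon)$ only to $S_{n(B')}u$ on $B'$, where the bound is legitimate.

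For the reverse inequality $\dim_PZ\ge\dim_{\rm BSP}Z$, the paper uses a shorter route that needs no $\gamma(\varepsilon)$ at all: a disjoint family $\{\overline{B}_{n_i}(x_i,\varepsilon)\}$ with $n_i\ge N$ is already a disjoint family of $[u]$-balls with diameter at most $e^{-N\underline u}$, and $|B_i|^{\alpha}=e^{-\alpha S^{*}_{n_i+k(\varepsilon)}u(x_i)}$ dominates $e^{-\alpha u(B_i)}$ up to the same kind of $e^{-\alpha k(\varepsilon)\max u}$ constant, giving $\mathcal{P}^{\alpha}_{e^{-N\underline u}}(A)\ge P(A,\alpha,\varepsilon,N)$ (up to that factor) directly. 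Your symmetric two-sided pinching also handles this direction, but it is more machinery than is needed.
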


\begin{proof}
For any $\varepsilon>0$ and any $A\subset Z$. Suppose $\mathcal{G}$ is a finite or countable pairwise disjoint families $\{\overline{B}_{n_i}(x_i,\varepsilon)\}$ such that $x_i\in A$, $n_i\geq N$ for all $i$, then $\mathcal{G}$ is a family with diameter less than $\exp(-N\underline{u})$.
$$\mathcal{P}_{\exp(-N\underline{u})}^{\alpha}(A)\geq P(A,\alpha,\varepsilon,N).$$
Letting $N\rightarrow \infty$, then
$$\mathcal{P}_0^{\alpha}(A)\geq P^*(A,\alpha,\varepsilon).$$
Hence,
$$\mathcal{P}^{\alpha}(Z)\geq P(Z,\alpha,\varepsilon).$$
Therefore
$$\dim_PZ\geq \dim_{\rm{BSP}}(Z,\varepsilon).$$
By the arbitrariness of $\varepsilon$, we have $$\dim_PZ\geq \dim_{\rm{BSP}}Z.$$

On the contrary, for any $\varepsilon>0$ and any $A\subset Z$. Suppose $\mathcal{G}$ is a finite or countable pairwise disjoint closed ball family with centers in A and diameter less than $\delta$. For any $B\in \mathcal{G}$, we can find a closed bowen ball $B'\subset B$ when $n(B)$ is large enough and
$$|B|\leq \exp(-u(B')+\gamma(\varepsilon)).$$
Hence for any $\alpha>0$, and a sufficient large $N$, we have
\begin{eqnarray*}
% \nonumber to remove numbering (before each equation)
  \sum_{B\in \mathcal{G}} |B|^{\alpha} &\leq&  \sum_{B'}\exp(-\alpha u(B')+\alpha n(B')\gamma(\varepsilon))\\
   &=&   \sum_{B'}\exp(-\alpha u(B')+\alpha \frac{u(B')}{\underline{u}}\gamma(\varepsilon))\\
   &=&   \sum_{B'}\exp(-\alpha (1-\frac{\gamma(\varepsilon)}{\underline{u}})u(B'))\\
   &\leq& P(A,\alpha (1-\frac{\gamma(\varepsilon)}{\underline{u}}),\varepsilon, N)
\end{eqnarray*}
Letting $N\rightarrow \infty$ and by the arbitrariness of $\mathcal{G}$, we have
$$\mathcal{P}_{\delta}^{\alpha}(A)\leq P^*(A,\alpha(1-\frac{\gamma(\varepsilon)}{\underline{u}}),\varepsilon).$$
Letting $\delta \rightarrow 0$ and by the definition of Packing and BS-Packing dimension, we have
$$\mathcal{P}^{\alpha}(Z)\leq P(Z,\alpha(1-\frac{\gamma(\varepsilon)}{\underline{u}}),\varepsilon).$$
Hence
$$\dim_{P}Z(1-\frac{\gamma(\varepsilon)}{\underline{u}})\leq \dim_{\rm{BSP}}(Z,\varepsilon).$$
Since $\varepsilon$ is arbitrary, we have
$$\dim_{P}Z\leq \dim_{\rm{BSP}}Z.$$
\end{proof}

\begin{definition}
Let $\mu \in M(X)$. The measure-theoretical lower and upper BS-dimensions of $\mu$ are defined respectively by
$$\underline{P}_{\mu}(f)=\int \underline{P}_{\mu}(f,x)d\mu(x),\quad \overline{P}_{\mu}(f)=\int \overline{P}_{\mu}(f,x)d\mu(x),$$
  where$$\underline{P}_{\mu}(f,x)=\lim_{\varepsilon\rightarrow 0}\liminf_{n\rightarrow \infty}-\frac{\log\mu(B_n(x,\varepsilon))}{\sum_{i=0}^{n-1}u(f^ix)},$$
  $$\overline{P}_{\mu}(f,x)=\lim_{\varepsilon\rightarrow 0}\limsup_{n\rightarrow \infty}-\frac{\log\mu(B_n(x,\varepsilon))}{\sum_{i=0}^{n-1}u(f^ix)}.$$
\end{definition}
\begin{example} If $u=1$, then $\underline{P}_{\mu}(f,x)=\underline{h}_{\mu}(f,x)$, $\overline{P}_{\mu}(f,x)=\overline{h}_{\mu}(f,x)$, where $\underline{h}_{\mu}(f,x)$, and $\overline{h}_{\mu}(f,x)$ are defined by Brin and Katok \cite{Brin}. They proved that for any $\mu\in M(X,f)$, $\underline{h}_{\mu}(f,x)=\overline{h}_{\mu}(f,x)$ for $\mu-$a.e $x\in X$, and $\int \underline{h}_{\mu}(f,x)d\mu(x)=h_{\mu}(f)$. Hence for $\mu\in M(X,f)$,
$$\underline{h}_{\mu}(f)=\overline{h}_{\mu}(f)=h_{\mu}(f).$$
\end{example}

In the following, we will formulate the variational principles of BS-Packing dimension. To this results, we need to introduce an additional notion. A set in a metric space is said to be analytic if it is a continuous image of the set $\mathcal{N}$ of infinite sequences of natural numbers (with its product topology). It is known that in a Polish space, the analytic subsets are closed under countable unions and intersections, and any Borel set is analytic (cf. Federer \cite{Federer}).
\begin{remark}
By the proof of Lemma \ref{equal}, we can see that if we the change the function $u(B)$ to $u'(B)=\sum_{k=0}^{n(B)-1}u(f^kx_{B})$, the dimension remains unchanged.
\end{remark}
\begin{lemma}\label{ab}
Let $Z\subset X$ and $s$, $\varepsilon>0$. Assume $P^*(Z, s, \varepsilon)=\infty$. Then for any given finite interval $(a,b)\subset \mathbb{R}$ with $a\geq 0$ and any $N\in \mathbb{N}$, there exists a finite disjoint collection $\{\overline{B}_{n_i}(x_i, \varepsilon)\}$ such that $x_i\in Z$, $n_i\geq N$ and $\sum_{i}e^{-\sum_{k=0}^{n_i-1}u(f^kx_i)s}\in (a,b)$.
\end{lemma}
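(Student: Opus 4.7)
The plan is to produce a disjoint collection whose $s$-weighted sum exceeds $b$, and then prune it one ball at a time, using the fact that individual terms can be made uniformly smaller than the length $b-a$ of the target interval.

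First, since $u$ is strictly positive and continuous on the compact space $X$, $\underline{u}:=\min_{x\in X}u(x)>0$. I will choose $N'\geq N$ large enough that $e^{-sN'\underline{u}}<b-a$. Then every weight of the form $e^{-s\sum_{k=0}^{n-1}u(f^kx)}$ arising from a ball $\overline{B}_n(x,\varepsilon)$ with $n\geq N'$ is automatically strictly smaller than $b-a$.

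Second, I produce a finite disjoint collection with $s$-weighted sum exceeding $b$. Because $P(Z,s,\varepsilon,M)$ is non-increasing in $M$ and $P^*(Z,s,\varepsilon)=\infty$, we have $P(Z,s,\varepsilon,N')=\infty$; hence there is a disjoint family $\mathcal{G}=\{\overline{B}_{n_i}(x_i,\varepsilon)\}$ with $x_i\in Z$, $n_i\geq N'$, and $\sum_{B_i\in\mathcal{G}}e^{-s\,u(B_i)}$ arbitrarily large, in particular larger than $b$. Since $u(B_i)\geq \sum_{k=0}^{n_i-1}u(f^kx_i)$ by the definition of $u(B_i)$, the very same collection satisfies $\sum_i e^{-s\sum_{k=0}^{n_i-1}u(f^kx_i)}>b$. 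If $\mathcal{G}$ is merely countable, a sufficiently long initial segment reduces the problem to a finite sub-collection whose sum still exceeds $b$.

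Third, I prune this finite family by removing one ball at a time. The sum strictly decreases at each step and terminates at $0<b$, so there must be a first step at which the sum drops below $b$. Let $S\geq b$ be the sum just before this step and let $t$ be the weight of the ball removed. By the choice of $N'$ in the first paragraph, $t<b-a$. The new sum therefore satisfies $S-t<b$ (by the definition of the critical step) and $S-t>S-(b-a)\geq b-(b-a)=a$, so it lands in $(a,b)$, as required. No serious obstacle arises; the only delicate point is calibrating the size of individual terms against the length of the interval, which is exactly what the preliminary choice of $N'$ is designed to accomplish.
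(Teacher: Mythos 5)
Your proposal is correct and follows essentially the same route as the paper's proof: choose $N'\geq N$ so that $e^{-sN'\underline{u}}<b-a$, use $P(Z,s,\varepsilon,N')=\infty$ to extract a finite disjoint family whose center-weighted sum exceeds $b$, and then discard balls one at a time until the sum falls into $(a,b)$. The only additions are helpful clarifications the paper leaves implicit (passing from $u(B_i)$ to the center sum via $u(B_i)\geq\sum_{k=0}^{n_i-1}u(f^kx_i)$, and the precise bookkeeping at the pruning step showing the first sub-$b$ sum exceeds $a$).
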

\begin{proof} Take $N_1>N$ large enough such that $e^{-N_1\underline{u}s}<b-a$. Since $P^*(Z, s, \varepsilon)=\infty$, we have $P(Z, s, \varepsilon, N_1)=\infty$. Thus there is a finite disjoint collection $\{\overline{B}_{n_i}(x_i, \varepsilon)\}$ such that $x_i\in Z$, $n_i\geq N_1$ and $\sum_{i}e^{-\sum_{k=0}^{n_i-1}u(f^kx_i)s}>b$. Since $e^{-\sum_{k=0}^{n_i-1}u(f^kx_i)s}\leq e^{-N_1\underline{u}s}<b-a$, by discarding elements in this collection one by one until we can have $\sum_{i}e^{-\sum_{k=0}^{n_i-1}u(f^kx_i)s}\in (a,b)$.
\end{proof}

In \cite{Feng} Feng and Huang proved variational principles for topological entropy and packing topological entropy. In the following we consider BS-P dimension and BS-C dimension.
\begin{theorem}
Let $(X, f)$ be a TDS.\\
(i)If $K\subset X$ is non-empty and compact, then
$$\dim_{\rm{BSP}}K=\sup\{\overline{P}_{\mu}(f): \mu \in M(X), \mu(K)=1\}.$$
(ii)If $Z\subset X$ is analytic, then
$$\dim_{\rm{BSP}}Z=\sup\{\dim_{\rm{BSP}}(K): K\subset Z \textrm{ is compact}\}.$$
\end{theorem}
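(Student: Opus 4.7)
The plan is to treat (i) and (ii) separately, with (i) carrying the bulk of the work. For part (i), I split the equality into two inequalities. For the inequality $\sup_\mu \overline{P}_\mu(f) \le \dim_{\rm{BSP}} K$, fix $\mu \in M(X)$ with $\mu(K)=1$ and pick $s<s'<\overline{P}_\mu(f)$. Since $\overline{P}_\mu(f) = \int \overline{P}_\mu(f,x)\,d\mu(x)$, there exist a Borel set $E\subset K$ with $\mu(E)>0$ and a scale $\varepsilon_0>0$ such that $\limsup_n \frac{-\log\mu(B_n(x,\varepsilon_0))}{\sum_{i=0}^{n-1} u(f^i x)} > s'$ for every $x\in E$. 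For any cover $K\subset\bigcup_j K_j$, some $K_j$ satisfies $\mu(E\cap K_j)>0$; a covering argument for Bowen balls (a Bowen-metric analogue of Lemma \ref{5ball}) then extracts a disjoint sub-family with centers in $K_j$ whose total weight $\sum e^{-s\,\sum u}$ is arbitrarily large, showing $P^*(K_j, s, \varepsilon_0/5) = \infty$. Hence $P(K,s,\varepsilon_0/5)=\infty$ and $\dim_{\rm{BSP}} K \ge s'$; letting $\varepsilon_0\to 0$ and $s'\to \overline{P}_\mu(f)$ completes this direction.

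For the reverse inequality $\dim_{\rm{BSP}} K \le \sup_\mu \overline{P}_\mu(f)$, I would run a Moran-type construction. Fix $s<\dim_{\rm{BSP}} K$ and $\varepsilon_0$ with $P(K,s,\varepsilon_0)=\infty$. Using Lemma \ref{ab} recursively inside each ancestor Bowen ball, construct nested finite disjoint families $\mathcal{G}_k = \{\overline{B}_{n_{k,i}}(x_{k,i},\varepsilon_0)\}$ with $x_{k,i}\in K$, rapidly increasing lengths $n_{k,i}$, weight sums $\sum_i e^{-s \sum_{j=0}^{n_{k,i}-1} u(f^j x_{k,i})}$ in a prescribed narrow interval near $1$, and genuine nesting $\overline{B}_{n_{k+1,i'}}(x_{k+1,i'},\varepsilon_0/4) \subset \overline{B}_{n_{k,i}}(x_{k,i},\varepsilon_0)$. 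The intersection $K_\infty$ is a compact subset of $K$ supporting a natural probability measure $\mu$ that distributes mass proportional to $e^{-s \sum u}$ among stage-$k$ descendants; a direct estimate of $\mu(B_n(x,\delta))$ for $x\in K_\infty$ yields $\overline{P}_\mu(f,x)\ge s$ for $\mu$-a.e.\ $x$, hence $\overline{P}_\mu(f)\ge s$. The main obstacle here is arranging the nesting across stages, since Bowen balls at different lengths live in different metrics $d_n$; this is overcome by using inner radii $\varepsilon_0/4$ inside outer radii $\varepsilon_0$ together with uniform continuity of $u$ and of the iterates $f^k$ at each fixed $k$.

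For part (ii), monotonicity gives $\sup_K \dim_{\rm{BSP}} K \le \dim_{\rm{BSP}} Z$ immediately. For the reverse, I would apply Choquet's capacitability theorem. Fix $s<\dim_{\rm{BSP}} Z$ and $\varepsilon$ small with $P(Z,s,\varepsilon)=\infty$. Define a set function $\Phi$ built from $P^*(\cdot,s,\varepsilon)$ — with an auxiliary truncation or weight to bypass the countable-decomposition infimum in the definition of $P$ — and verify the three Choquet axioms (monotonicity, continuity from below on increasing sequences, continuity from above on decreasing sequences of compact sets). Capacitability applied to the analytic set $Z$ then produces, for each $\delta>0$, a compact $K\subset Z$ with $\Phi(K)$ close to $\Phi(Z)$, from which $\dim_{\rm{BSP}} K \ge s-\delta$. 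A secondary obstacle here is that $P^*$ is not directly a Choquet capacity because of the decomposition infimum in $P$, so the construction of $\Phi$ must be carried out carefully.
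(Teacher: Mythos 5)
Your part~(i), first inequality ($\sup_\mu\overline P_\mu(f)\le\dim_{\rm{BSP}}K$), follows the same lines as the paper's Part~1 and is essentially right. The problems are with the Moran construction for the reverse inequality and, more seriously, with your plan for part~(ii).

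On the Moran construction: you write ``Using Lemma~\ref{ab} recursively inside each ancestor Bowen ball,'' but Lemma~\ref{ab} requires $P^*(\cdot,s,\varepsilon)=\infty$ of the set you are refining, and the hypothesis $P(K,s,\varepsilon_0)=\infty$ does not propagate down to $K\cap B(x,\gamma)$ for small balls around chosen centers: some such sub-balls might carry essentially no packing mass. The paper handles this with two devices you omit. First it works with \emph{two} exponents $s<t<\dim_{\rm{BSP}}(Z,\varepsilon)$: the parameter $t$ is used to control positivity ($P(\cdot,t,\varepsilon)>0$), and since $s<t$ any set with $P^*(\cdot,t,\varepsilon)>0$ automatically has $P^*(\cdot,s,\varepsilon)=\infty$, which is what Lemma~\ref{ab} needs. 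Second, and crucially, at each stage it removes the ``light'' open set $H=\bigcup\{G:G\text{ open},\ P(\cdot\cap G,t,\varepsilon)=0\}$ to obtain a core $Z'$ on which $P(Z'\cap G,t,\varepsilon)>0$ for every open $G$ meeting $Z'$; this non-vanishing property is exactly what guarantees the recursion can be continued inside every small ball around a previously chosen center. Without this step your construction can stall. Also, the intricate ``safety radii'' conditions \eqref{4.1}--\eqref{4.4} are needed to keep the stage-$p$ Bowen balls disjoint after the weak-star limit and are not an afterthought; a sketch of ``rapidly increasing lengths'' is not enough on its own.

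On part~(ii): this is the larger gap. You propose building a Choquet capacity $\Phi$ out of $P^*$ or $P$ and invoking capacitability. But packing-type premeasures are notoriously \emph{not} Choquet capacities --- $P^*$ is not an outer measure at all (it is not countably subadditive), and $P$ is not known to be continuous from below on increasing sequences of arbitrary sets, which is one of the axioms you would have to verify. Indeed, the inner-regularity theorem for packing measures (Joyce and Preiss, which the paper explicitly cites as its model) exists precisely because the Choquet route fails here, and it is proved by a direct Moran-type construction. The paper's architecture is therefore different from yours in an essential way: it does \emph{not} prove (i) and then try to lift it to analytic sets. Instead its Part~2 runs the Moran construction once, but threads it through the parameterization $\phi:\mathcal{N}\to Z$, choosing at stage~$p$ not only the points $K_p$ but also an integer $n_p$ so that $K_p\subset Z_{n_1,\dots,n_p}=\phi(\Gamma_{n_1,\dots,n_p})$. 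The nested truncated sets $\Gamma_{n_1,\dots,n_p}$ are compact in $\mathcal{N}$, and a Cantor diagonal argument shows $\bigcap_p Z_{n_1,\dots,n_p}=\bigcap_p\overline{Z_{n_1,\dots,n_p}}$, which is what makes the limit set $K$ both compact and a subset of $Z$. This single construction yields a compact $K\subset Z$ and $\mu\in M(K)$ with $\overline P_\mu(f)\ge s$, which simultaneously proves (i) (take $Z=K$ compact, where the parameterization is trivial) and (ii) (via Part~1 applied to the constructed $K$). You should abandon the capacitability route and instead carry the analytic parameterization through the Moran construction.
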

\begin{proof} We divide the proof into two parts:

 Part 1. $\dim_{\rm{BSP}}Z\geq\sup\{\overline{P}_u(f):\mu\in M(X), \mu(Z)=1\}$ for any Borel set $Z\subset X$.

To see this, let $\mu \in M(X)$ with $\mu(Z)=1$ for some Borel set $Z\subset X$. We need to show that $\dim_{\rm{BSP}}Z\geq \overline{P}_{\mu}(f)$. For this purpose we may assume $\overline{P}_{\mu}(f)>0$; otherwise we have nothing to prove. Let $0<s<\overline{P}_{\mu}(f)$. Then there exist $\varepsilon, \delta>0$, and a Borel set $A\subset Z$ with $\mu(A)>0$ such that
$$\overline{P}_{\mu}(f,x, \varepsilon)>s+\delta,\quad \forall x\in A,$$
where $\overline{P}_{\mu}(f,x, \varepsilon)=\limsup_{n\rightarrow \infty}-\frac{\log\mu(B_n(x,\varepsilon))}{\sum_{i=0}^{n-1}u(f^ix)}$.

Next we show that $P(Z,s,\varepsilon/5)=\infty$, which implies that $\dim_{\rm{BSP}}Z\geq\dim_{\rm{BSP}}(Z,\varepsilon/5)\geq s$.
To achieve this, it suffices to show that $P^*(E,s, \varepsilon/5)=\infty$ for any Borel $E\subset A$ with $\mu(E)>0$. Fix such a set $E$. Define
$$E_n=\{x\in E: \mu(B_n(x,\varepsilon))<e^{-\sum_{i=0}^{n-1}u(f^ix)(s+\delta)}\},\qquad n\in \mathbb{N}.$$
Since $E\subset A$, we have $\bigcup_{n=N}^{\infty}E_n=E$ for each $N\in \mathbb{N}$. Fix $N\in \mathbb{N}$. Then $\mu(\bigcup_{n=N}^{\infty}E_n)=\mu(E)$, and hence there exists $n\geq N$ such that
$$\mu(E_n)\geq \frac{1}{n(n+1)}\mu(E).$$
Fix such $n$ and consider the family $\{B_n(x,\varepsilon/5):x\in E_n\}$. By Lemma \ref{5ball} (in which we use $d_n$ instead of $d$), there exits a finite pairwise disjoint family $\{B_n(x_i, \varepsilon/5)\}$ with $x_i\in E_n$ such that
$$\bigcup_iB_n(x_i,\varepsilon)\supset \bigcup_{x\in E_n}B_n(x, \varepsilon/5)\supset E_n.$$
Hence \begin{eqnarray*}
      % \nonumber to remove numbering (before each equation)
        P(E,s,\varepsilon/5, N) &\geq& P(E_n,s,\varepsilon/5,N)\geq \sum_ie^{-\sum_{k=0}^{n-1}u(f^kx_i)s} \\
         &\geq& e^{\sum_{k=0}^{n-1}u(f^kx_i)\delta}\sum_ie^{-\sum_{k=0}^{n-1}u(f^kx_i)(s+\delta)} \\
         &\geq& e^{\sum_{k=0}^{n-1}u(f^kx_i)\delta}\sum_i\mu(B_n(x_i,\varepsilon))\\
         &\geq& e^{\sum_{k=0}^{n-1}u(f^kx_i)\delta}\mu(E_n)\geq  e^{\sum_{k=0}^{n-1}u(f^kx_i)\delta}\frac{\mu (E)}{n(n+1)}
      \end{eqnarray*}
Since $\frac{e^{\sum_{k=0}^{n-1}u(f^kx_i)\delta}}{n(n+1)}\rightarrow \infty$ as $n\rightarrow \infty$, letting $N\rightarrow \infty$ we obtain that $P^*(E,s,\varepsilon/5)=\infty$.\\

Part 2. Let $Z \subset X$ be analytic with $\dim_{\rm{BSP}}Z>0$. For any $0<s<\dim_{\rm{BSP}}Z$, there exists a compact set $K\subset Z$ and $\mu\in M(K)$ such that $\overline{P}_{\mu}(f)\geq s$.

Since $Z$ is analytic, there exists a continuous surjective map $\phi:\mathcal{N}\rightarrow Z$. Let $\Gamma_{n_1,n_2,\cdots,n_p}$ be the set of $(m_1,m_2,\cdots)\in \mathcal{N}$ such that $m_1\leq n_1$, $m_2\leq n_2$, $\cdots$, $m_p\leq n_p$ and let $Z_{n_1,\cdots,n_p}$ be the image of $\Gamma_{n_1,\cdots,n_p}$ under $\phi$.

Take $\varepsilon>0$ small enough so that $0<s<\dim_{\rm{BSP}}(Z,\varepsilon)$. Take $t\in(s, \dim_{\rm{BSP}}(Z,\varepsilon))$. We are going to construct inductively a sequence of finite sets $(K_i)_{i=1}^{\infty}$ and a sequence of finite measures $(\mu_i)_{i=1}^{\infty}$ so that $K_i\subset Z$ and $\mu_i$ is supported on $K_i$ for each $i$. Together with these two sequences, we construct also a sequence of integers $(n_i)$, a sequence of positive numbers $(\gamma_i)$ and a sequence of integer-valued function $(m_i:K_i\rightarrow \mathbb{N})$. The method of our construction is inspired by the work of Joyce and Preiss \cite{Joyce}, Feng and Huang \cite{Feng}.

The construction is divided into several small steps:\\
Step 1. Construct $K_1$ and $\mu_1$, as well as $m_1(\cdot)$, $n_1$ and $\gamma_1$.\\
Note that $P(Z,t,\varepsilon)=\infty$. Let
$$H=\bigcup\{G\subset X: G \textrm{ is open, } P(Z\cap G, t, \varepsilon)=0\}.$$
Then $P(Z\cap H, t,\varepsilon)=0$ by the separability of $X$. Let $Z'=Z \backslash H=Z\cap (X\backslash H)$. For any open set $G\subset X$, either $Z'\cap G=\emptyset$, or $P(Z'\cap G,t,\varepsilon)>0$. To see this, assume $P(Z'\cap G,t,\varepsilon)=0$ for an open set $G$; then $P(Z\cap G,t, \varepsilon)\leq P(G\cap Z',t, \varepsilon)+P(Z\cap H,t, \varepsilon)=0$, implying $G\subset H$ and hence $Z'\cap G=\emptyset$.

Note that $P(Z',t,\varepsilon)=P(Z,t,\varepsilon)=\infty$ (because $P(Z,t,\varepsilon)\leq P(Z',t,\varepsilon)+P(Z\cap H,t,\varepsilon)=P(Z',t,\varepsilon)$). It follows $P(Z',s,\varepsilon)=\infty$. By Lemma \ref{ab}, we can find a finite set $K_1\subset Z'$, an integer-valued function $m_1(x)$ on $K_1$ such that the collection $\{\overline{B}_{m_1(x)}(x,\varepsilon)\}_{x\in K_1}$ is disjoint and
$$\sum_{x\in K_1}e^{-\sum_{k=0}^{m_1(x)-1}u(f^kx)s}\in (1,2).$$
Define $\mu_1=\sum_{x\in K_1}e^{-\sum_{k=0}^{m_1(x)-1}u(f^kx)s}\delta_x$, where $\delta_x$ denotes the Dirac measure at $x$. Take a small $\gamma_1>0$ such that for any function $z:K_1\rightarrow X$ with $d(x,z(x))\leq \gamma_1$, we have for each $x\in K_1$,
\begin{equation}\label{4.1}
      \Big(\overline{B}(z(x),\gamma_1)\cup\overline{B}_{m_1(x)}(z(x),\varepsilon)\Big)\cap\Big(\bigcup_{y\in K_1\backslash\{x\}}\overline{B}(z(y),\gamma_1)\cup\overline{B}_{m_1(x)}(z(y),\varepsilon)\Big)=\emptyset.
\end{equation}
Here and afterwards, $\overline{B}(x,\varepsilon)$ denotes the closed ball $\{y\in X: d(x,y)\leq \varepsilon\}$. Since $K_1\subset Z'$, $P(Z\cap B(x,\gamma_1/4),t,\varepsilon)\geq P(Z'\cap B(x,\gamma_1/4),t,\varepsilon)>0$ for each $x\in K_1$. Therefore we can pick a large $n_1\in \mathbb{N}$ so that $Z_{n_1}\supset K_1$ and $P(Z_{n_1}\cap B(x,\gamma_1/4),t,\varepsilon)>0$ for each $x\in K_1$.\\

Step 2. Construct $K_2$ and $\mu_2$, as well as $m_2(\cdot)$, $n_2$ and $\gamma_2$.\\

By (\ref{4.1}), the family of balls $\{\overline{B}(x,\gamma_1)\}_{x\in K_1}$, are pairwise disjoint. For each $x\in K_1$, since $P(Z_{n_1}\cap B(x,\gamma_1/4),t,\varepsilon)>0$, we can construct as Step 1, a finite sets
$$E_2(x)\subset Z_{n_1}\cap B(x,\gamma_1/4).$$
and an integer-valued function
$$m_2:E_2(x)\rightarrow \mathbb{N}\cap [\max \{m_1(y):y\in K_1\},\infty)$$
such that \\

 (2-a) $  P(Z_{n_1}\cap G,t,\varepsilon)>0 \textrm{   for each open set } G \textrm{ with } G\cap E_2(x)\neq \emptyset $;\\

 (2-b) The elements in  $\{\overline{B}_{m_2(y)}(y,\varepsilon)\}_{y\in E_2(x)}$  are disjoint, and\\
 $$\mu_1(\{x\})<\sum_{y\in E_2(x)}e^{\sum_{k=0}^{m_2(y)-1}u(f^ky)s}<(1+2^{-2})\mu_1(\{x\}).$$
To see it, we fix $x\in K_1$. Denote $F=Z_{n_1}\cap B(x,\gamma_1/4)$. Let
$$H_x=\bigcup\{G\subset X: G \textrm{ is open } P(F\cap G,t, \varepsilon)=0\}.$$
Set $F'=F\backslash H_x$. Then as in Step 1, we can show that $P(F',t,\varepsilon)=P(F,t,\varepsilon)>0$ and furthermore, $P(F'\cap G,s, \varepsilon)>0$ for any open set $G$ with $G\cap F'\neq \emptyset$. Note that $P(F',s,\varepsilon)=\infty$ (since $s<t$), by Lemma \ref{4.1}, we can find a finite set $E_2(x)\subset F'$ and a map $m_2:E_2(x)\rightarrow \mathbb{N}\cap [\max\{m_1(y):y \in K_1\},\infty)$ so that (2-b) holds. Observe that if a open set $G$ satisfies $G\cap E_2(x)\neq \emptyset$, then $G\cap F'\neq \emptyset$, and hence $P(Z_{n_1}\cap G, t,\varepsilon)\geq P(F'\cap G, t, \varepsilon)>0$. Thus (2-a) holds.

Since the family $\{\overline{B}(x,\gamma_1)\}_{x\in K_1}$ is disjoint, $E_2(x)\cap E_2(x')=\emptyset$ for different $x$, $x'\in K_1$. Define $K_2=\cup_{x\in K_1}E_2(x)$ and
$$\mu_2=\sum_{y\in K_2}e^{-\sum_{k=0}^{m_2(y)-1}u(f^ky)s}\delta_y.$$
By (\ref{4.1}) and (2-b), the elements in $\{\overline{B}_{m_2(y)}(y,\varepsilon)\}_{y\in K_2}$ are pairwise disjoint. Hence we can take $0<\gamma_2<\gamma_1/4$ such that for any function $z:K_2\rightarrow X$ with $d(x,z(x))<\gamma_2$ for $x\in K_2$, we have
\begin{equation}\label{4.2}
    \Big(\overline{B}(z(x),\gamma_2)\cup\overline{B}_{m_2(x)}(z(x),\varepsilon)\Big)\cap\Big(\bigcup_{y\in K_2\backslash \{x\}} \overline{B}(z(y),\gamma_2)  \cup\overline{B}_{m_2(y)}(z(y),\varepsilon)\Big)=\emptyset
\end{equation}
for each $x\in K_2$. Choose a large $n_2\in \mathbb{N}$ such that $Z_{n_1,n_2}\supset K_2$ and $P(Z_{n_1,n_2}\cap B(x,\gamma_2/4),t,\varepsilon)>0$ for each $x\in K_2$.\\

Step 3. Assume that $K_i$, $\mu_i$, $m_i(\cdot)$, $n_i$ and $\gamma_i$ have been constructed for $i=1,\cdots,p$. In particular, assume that for any function $z:K_p\rightarrow X$ with $d(x,z(x))<\gamma_p$ for $x\in K_p$, we have
\begin{equation}\label{4.3}
    \Big(\overline{B}(z(x),\gamma_p)\cup\overline{B}_{m_p(x)}(z(x),\varepsilon)\Big)\cap\Big(\bigcup_{y\in K_p\backslash \{x\}} \overline{B}(z(y),\gamma_p)  \cup\overline{B}_{m_p(y)}(z(y),\varepsilon)\Big)=\emptyset
\end{equation}
for each $x\in K_p$; and $Z_{n_1,\cdots, n_p}\supset K_p$ and $P(Z_{n_1,\cdots,n_p}\cap B(x,\gamma_p/4),t, \varepsilon)>0$ for each $x\in K_p$. We construct below each term of them for $i=p+1$ in a way similar to Step 2.

Note that the elements in $\{\overline{B}(x,\gamma_p)\}_{x\in K_p}$ are pairwise disjoint. For each $x\in K_p$, since $P(Z_{n_1,\cdots, n_p}\cap B(x,\gamma_p/4),t,\varepsilon)>0$, we can construct as Step 2, a finite set
$$E_{p+1}(x)\subset Z_{n_1,\cdots,n_p}\cap B(x,\gamma_p/4)$$
and an integer-valued function
$$m_{p+1}:E_{p+1}(x)\rightarrow \mathbb{N}\cap[\max\{m_p(y): y\in K_p\},\infty)$$
such that

(3-a) $P(Z_{n_1,\cdots,n_p}\cap G,t,\varepsilon)>0$ for each open set $G$ with $G\cap E_{p+1}(x)\neq \emptyset$;

(3-b) $\{\overline{B}_{m_{p+1}(y)}(y,\varepsilon)\}_{y\in E_{p+1}(x)}$ are disjoint and satisfy
$$\mu_p(\{x\})<\sum_{y\in E_{p+1}(x)}e^{-\sum_{k=0}^{m_{p+1}(y)-1}u(f^ky)s}<(1+2^{-p-1})\mu_{p}(\{x\}).$$
Clearly $E_{p+1}(x)\cap E_{p+1}(x')=\emptyset$ for different $x$, $x'\in K_p$.
Define $K_{p+1}=\bigcup_{x\in K_p}E_{p+1}(x)$ and
$$\mu_{p+1}=\sum_{y\in K_{p+1}}e^{-\sum_{k=0}^{m_{p+1}(y)-1}u(f^ky)s}\delta_y.$$
By (\ref{4.3}) and (3-b), $\{\overline{B}_{m_{p+1}(y)}(y,\varepsilon)\}_{y\in K_{p+1}}$ are disjoint. Hence we can take $0<\gamma_{p+1}<\gamma_p/4$ such that for any function $z:K_{p+1}\rightarrow X$ with $d(x,z(x))<\gamma_{p+1}$, we have for each $x\in K_{p+1}$,
\begin{equation}\label{4.4}
\begin{split}
    \Big(\overline{B}(z(x),\gamma_{p+1})\cup\overline{B}_{m_{p+1}(x)}(z(x),\varepsilon)\Big)\bigcap\Big(
    &\cup_{y\in K_{p+1}\backslash \{x\}} \overline{B}(z(y),\gamma_{p+1})\\
    &\cup\overline{B}_{m_{p+1}(y)}(z(y),\varepsilon)\Big)=\emptyset
\end{split}
\end{equation}
Choose a large $n_{p+1}\in \mathbb{N}$ such that $Z_{n_1,\cdots,n_{p+1}}\supset K_{p+1}$ and
$$P(Z_{n_1,\cdots,n_{p+1}}\cap B(x,\gamma_{p+1}/4),t,\varepsilon)>0$$
for each $x\in K_{p+1}$.

As in the above steps, we can construct by induction the sequences $(K_i)$, $(\mu_i)$, $(m_i(\cdot))$, $(n_i)$ and $(\gamma_i)$. We summarize some of their basic properties as follows:\\
(a) For each $i$, the family $\mathcal{F}_i=\{\overline{B}(x,\gamma_i):x\in K_i\}$ is disjoint. Each element in $\mathcal{F}_{i+1}$ is a subset of $\overline{B}(x,\gamma_i/2)$ for some $x\in K_i$.\\
(b) For each $x\in K_i$ and $z\in \overline{B}(x,\gamma_i)$,
$$\overline{B}_{m_i(x)}(z,\varepsilon)\cap \bigcup_{y\in K_i\backslash\{x\}}\overline{B}(y,\gamma_i)=\emptyset.$$
and
\begin{eqnarray*}
\mu_i(\overline{B}(x,\gamma_i))&=&e^{-\sum_{k=0}^{m_i(x)-1}u(f^kx)s}\\
&\leq& \sum_{y\in E_{i+1}(x)}e^{-\sum_{k=0}^{m_{i+1}(y)-1}u(f^kx)s}\\
&\leq& (1+2^{-i-1})\mu_i(\overline{B}(x,\gamma_i)),
\end{eqnarray*}
where $E_{i+1}(x)=B(x,\gamma_i)\cap K_{i+1}$.\\
The second part in (b) implies,
$$\mu_i(F_i)\leq \mu_{i+1}(F_i)=\sum_{F\in \mathcal{F}_{i+1}:F\subset F_i}\mu_{i+1}(F)\leq (1+2^{-i-1})\mu_i(F_i),\qquad F_i\in \mathcal{F}_i.$$
Using the above inequalities repeatedly, we have for any $j>i$,
\begin{equation}\label{4.5}
    \mu_i(F_i)\leq \mu_j(F_i)\leq \prod_{n=i+1}^{j}(1+2^{-n})\mu_i(F_i)\leq C\mu_i(F_i), \qquad \forall F_i\in \mathcal{F}_i,
\end{equation}
where $C=\prod_{n=1}^{\infty}(1+2^{-n})<\infty$.\\
Let $\tilde{\mu}$ be the limit point of $(\mu_i)$ in the weak-star topology. Let
$$K=\bigcap_{n=1}^{\infty}\overline{\bigcup_{i\geq n}K_i}.$$
Then $\tilde{\mu}$ is supported on $K$. Furthermore
$$K=\bigcap_{n=1}^{\infty}\overline{\bigcup_{i\geq n}K_i}\subset \bigcap_{p=1}^{\infty}\overline{Z_{n_1,\cdots,n_p}}. $$
However by the continuity of $\phi$, we can show that
$$\bigcap_{p=1}^{\infty}Z_{n_1,\cdots,n_p}=\bigcap_{p=1}^{\infty}\overline{Z_{n_1,\cdots,n_p}}$$
 by the applying Cantor's diagonal argument. Hence $K$ is a compact subset of $Z$.
On the other hand, by (\ref{4.5}),
\begin{eqnarray*}
% \nonumber to remove numbering (before each equation)
  e^{-\sum_{k=0}^{m_i(x)-1}u(f^kx)s} &=& \mu_i(\overline{B}(x,\gamma_i)) \leq \tilde{\mu}(B(x,\gamma_i))\\
   &\leq& C\mu_i(\overline{B}(x,\gamma_i))=Ce^{-\sum_{k=0}^{m_i(x)-1}u(f^kx)s},\quad \forall x\in K_i.
\end{eqnarray*}

In particular, $1<\sum_{x\in K_1}\mu_1(B(x,\gamma_1))\leq \tilde{\mu}(K)\leq \sum_{x\in K_1}C\mu_1(B(x,\gamma_1))\leq 2C$. Note that $K\subset \bigcup_{x\in K_i}\overline{B}(x,\gamma_i/2)$. By the first part of (b), for each $x\in K_i$ and $z\in \overline{B}(x,\gamma_i)$,
$$\tilde{\mu}(\overline{B}_{m_i(x)}(z,\varepsilon))\leq\tilde{\mu} (\overline{B}(x,\gamma_i/2))\leq Ce^{-\sum_{k=0}^{m_i(x)-1}u(f^kx)s}.$$
For each $z\in K$ and $i \in \mathbb{N}$, $z\in \overline{B}(x,\gamma_i/2)$ for some $x\in K_i$. Hence
$$\tilde{\mu}(B_{m_i(x)}(z,\varepsilon))\leq Ce^{-\sum_{k=0}^{m_i(x)-1}u(f^kx)s}.$$
Define $\mu=\tilde{\mu}/\tilde{\mu}(K)$. Then $\mu\in M(K)$, and for each $z\in K$, there exists a sequence $k_i \uparrow \infty$ such that $\mu(B_{k_i}(z,\varepsilon))\leq Ce^{-\sum_{k=0}^{k_i(z)-1}u(f^kz)s}/\tilde{\mu}(K)$. It follows that $\overline{P}_{\mu}(f)\geq s$.

\end{proof}

\section{Weighted BS Dimension}\label{jiaquan BS dingyi}
For any function $h:X\rightarrow [0,\infty)$, $s\geq 0$, $N\in \mathbb{N}$ and $\varepsilon>0$, define
\begin{equation}\label{wbs}
    W(h,s,\varepsilon,N)=\inf\sum_i c_i\exp(-s\sum_{k=0}^{n_i-1}u(f^kx_i)),
\end{equation}
where the infimum is taken over all finite or countable families $\{(B_{n_i}(x_i,\varepsilon),c_i)\}$, such that $0<c_i<\infty$, $x_i\in X$, $n_i\geq N$ and
\begin{equation}
\sum_i c_i \mathcal{X}_{B_i}\geq h,
\end{equation}
where $B_i=B_{n_i}(x_i,\varepsilon)$, and $\mathcal{X}_A$ denotes the characteristic function of $A$.

For $Z\subset X$, and $h=\mathcal{X}_Z$, we set $W(Z,s,\varepsilon,N)=W(\mathcal{X}_Z,s,\varepsilon,N)$.
The quantity $W(Z,s,\varepsilon,N)$ does not decrease as $N$ increases and $\varepsilon$ decrease, hence the following limits exist:
\begin{equation*}
W(Z,s,\varepsilon)=\lim_{N\rightarrow \infty}W(Z,s,\varepsilon,N),\quad W(Z,s)=\lim_{\varepsilon \rightarrow 0}W(Z,s,\varepsilon).
\end{equation*}
Clearly, there exists a critical value of the parameter $s$, which we will denote by $\dim^{WBS}Z$, where $W(Z,s)$ jumps from $\infty$ to $0$, i.e.
\begin{equation}
W(Z,s)=
\left\{
  \begin{array}{ll}
    0, & \hbox{$s > \dim^{WBS}Z$;} \\
   \infty, & \hbox{$s<\dim^{WBS}Z$.}
  \end{array}
\right.
\end{equation}
We call $\dim^{WBS}Z$ the weighted BS dimension of $Z$.
A more extensive and general treatment can be found in Mattila \cite{Mattila}, Kelly \cite{Kelly} and Federer \cite{Federer}.

\section{Equivalence of $\dim_{\rm{BSC}}$ and $\dim^{WBS}$}
\begin{lemma}\label{5.2.1}
If $Z\subset X$. Then for any $s>0$ and $\varepsilon$, $\delta>0$, we have
\begin{equation}
M(Z,s+\delta,6\varepsilon,N)\leq W(Z,s,\varepsilon,N)\leq M(Z,s,\varepsilon,N).
\end{equation}
when $N$ is large enough.
As a result, $$M(Z,s+\delta)\leq W(Z,s) \leq M(Z,s),$$ $$\dim_{\rm{BSC}}Z=\dim^{WBS}Z.$$
\end{lemma}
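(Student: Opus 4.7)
The right inequality $W(Z,s,\varepsilon,N)\leq M(Z,s,\varepsilon,N)$ is essentially immediate. Given any cover $\mathcal{G}\subset\bigcup_{j\geq N}\mathcal{W}_j(\varepsilon)$ of $Z$, assigning weight $c_B=1$ to each $B\in\mathcal{G}$ turns $\mathcal{G}$ into a weighted admissible family for $W$; combining this with the remark preceding the lemma (which permits replacing $u(B)$ by the center sum $u'(B)=\sum_{k=0}^{n(B)-1}u(f^k x_B)$ without affecting the dimension) gives the required inequality.

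The substantive step is $M(Z,s+\delta,6\varepsilon,N)\leq W(Z,s,\varepsilon,N)$, which I plan to prove by adapting the Feng--Huang strategy from \cite{Feng} to the BS setting. Start from a weighted family $\{(B_{n_i}(x_i,\varepsilon),c_i)\}_i$ nearly realizing $W(Z,s,\varepsilon,N)$ with $n_i\geq N$. Truncate so that $c_i\in(0,1]$ (which only shrinks the sum), and partition indices dyadically by weight and Bowen length:
\begin{equation*}
    \mathcal{I}_{n,k}=\{\,i:n_i=n,\;c_i\in(2^{-k-1},2^{-k}]\,\}\qquad(n\geq N,\;k\geq 0).
\end{equation*}
Pick positive thresholds $\theta_{n,k}$ with $\sum_{n,k}\theta_{n,k}<1$ (e.g.\ $\theta_{n,k}=2^{-n-k-3}$) and set
\begin{equation*}
    Z_{n,k}=\Big\{\,x\in Z:\sum_{i\in\mathcal{I}_{n,k},\,x\in B_i}c_i\geq\theta_{n,k}\,\Big\}.
\end{equation*}
Since $\sum_i c_i\mathcal{X}_{B_i}\geq\mathcal{X}_Z$, pigeonhole yields $Z=\bigcup_{n,k}Z_{n,k}$.

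For each nonempty $Z_{n,k}$, apply Lemma \ref{5ball} in the Bowen metric $d_n$ to the family $\{B_n(x,\varepsilon):x\in Z_{n,k}\}$ to extract a $d_n$-pairwise disjoint subfamily $\{B_n(y^{n,k}_j,\varepsilon)\}_j$ with $y^{n,k}_j\in Z_{n,k}$ whose $5\varepsilon$-enlargements (hence their $6\varepsilon$-enlargements, admissible for $M$) cover $Z_{n,k}$. The $d_n$-disjointness forces each $x_i$ with $i\in\mathcal{I}_{n,k}$ to lie in at most one $B_n(y^{n,k}_j,\varepsilon)$, equivalently $y^{n,k}_j\in B_i$ for at most one $j$, so the counting bound $(\#\{y^{n,k}_j\}_j)\,\theta_{n,k}\leq\sum_{i\in\mathcal{I}_{n,k}}c_i$ holds. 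Concatenating the enlarged balls over all $(n,k)$ yields a cover $\mathcal{G}^\ast$ of $Z$ admissible for $M(Z,s+\delta,6\varepsilon,N)$.

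The main obstacle is the final estimate on $\sum_{B\in\mathcal{G}^\ast}\exp(-(s+\delta)u(B))$. Using $u(B_n(y^{n,k}_j,6\varepsilon))\geq S_n u(y^{n,k}_j)$ together with the modulus-of-continuity inequality $|S_n u(y^{n,k}_j)-S_n u(x_i)|\leq n\gamma(\varepsilon)$ for any $i\in\mathcal{I}_{n,k}$ with $y^{n,k}_j\in B_i$, each term can be rewritten as
\begin{equation*}
    \exp(-s S_n u(x_i))\cdot\exp\bigl(-n\bigl[\delta\underline{u}-(s+\delta)\gamma(\varepsilon)\bigr]\bigr),
\end{equation*}
up to a factor of order $2^{k+1}/\theta_{n,k}$ coming from converting $1$ into $c_i$ and from the counting bound. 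Provided $\varepsilon$ is small enough that $(s+\delta)\gamma(\varepsilon)<\delta\underline{u}$, the bracketed exponent is strictly negative, so the $e^{-n\delta\underline{u}/2}$ decay absorbs the $2^{k+1}/\theta_{n,k}$ overhead once $N$ (hence every $n\geq N$) is large enough. Summing over $i\in\mathcal{I}_{n,k}$ and then over $(n,k)$ gives $M(Z,s+\delta,6\varepsilon,N)\leq W(Z,s,\varepsilon,N)$. Passing to the limit $N\to\infty$ and $\varepsilon\to 0$ yields $M(Z,s+\delta)\leq W(Z,s)\leq M(Z,s)$, and since $\delta>0$ is arbitrary, the critical exponents coincide, giving $\dim_{\rm{BSC}}Z=\dim^{\rm WBS}Z$.
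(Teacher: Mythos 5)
Your right-hand inequality and your high-level plan (dyadic stratification plus the $5r$-lemma) are in the same spirit as the paper, but the left-hand inequality $M(Z,s+\delta,6\varepsilon,N)\leq W(Z,s,\varepsilon,N)$ does not go through as you have set it up, and the obstruction is exactly the point where the paper brings in a Federer-type iterated-covering argument that your proposal omits.

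Concretely: after stratifying by $\mathcal{I}_{n,k}=\{i:n_i=n,\ c_i\in(2^{-k-1},2^{-k}]\}$ and extracting a disjoint family $\{B_n(y_j^{n,k},\varepsilon)\}_j$, you convert the cover's unit weights back into the $c_i$'s by the bound $1\leq 2^{k+1}c_{i(j)}$, and then claim that the exponential decay $e^{-n(\delta\underline{u}-(s+\delta)\gamma(\varepsilon))}$ ``absorbs the $2^{k+1}/\theta_{n,k}$ overhead once $N$ is large enough.'' This is false: with $\theta_{n,k}=2^{-n-k-3}$ you have $2^{k+1}/\theta_{n,k}=2^{n+2k+4}$, which for any fixed $n\geq N$ blows up as $k\to\infty$, while the decay factor $e^{-n(\cdots)}$ is constant in $k$. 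There is no choice of $\theta_{n,k}$ that rescues this: making $\theta_{n,k}$ decay faster in $k$ only worsens $1/\theta_{n,k}$, and making it decay slower breaks $\sum_{n,k}\theta_{n,k}<1$. So the bound $\sum_{n,k}2^{k+1}e^{-n(\cdots)}\sum_{i\in\mathcal{I}_{n,k}}c_i e^{-sS_nu(x_i)}$ cannot be dominated by $\sum_i c_i e^{-sS_nu(x_i)}$.

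The paper avoids this entirely. It stratifies only in $n$, not by weight, and works with the level sets $Z_{n,t}=\{x\in Z:\sum_{i\in\mathcal{I}_n}c_i\mathcal{X}_{B_i}(x)>t\}$. The crucial step you are missing (Step~1 of the paper, following Federer 2.10.24 and Mattila's Lemma~8.16) is to approximate the $c_i$'s by integers and then \emph{iterate} Lemma~\ref{5ball} $m\approx t$ times, producing $m$ disjoint subfamilies $\mathcal{B}_1,\dots,\mathcal{B}_m$ each of whose $5\varepsilon$-enlargements cover $Z_{n,k,t}$, with the book-keeping $v_j=v_{j-1}-\mathcal{X}_{\mathcal{B}_j}$. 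Telescoping gives $\sum_{j=1}^m\sum_{B\in\mathcal{B}_j}e^{-sS_nu(x_B)}\leq\sum_{i}c_i e^{-sS_nu(x_i)}$, so the cheapest of the $m$ subfamilies costs at most $\frac{1}{t}\sum_i c_i e^{-sS_nu(x_i)}$ --- a bound depending only on the threshold $t$ and not on the sizes of individual weights. That $1/t$ factor, combined with $e^{-\delta\underline{u}n}\leq n^{-2}$ and the choice $t\mapsto n^{-2}t$ in Step~3, closes the argument. A single application of the $5r$-lemma (as in your proposal) genuinely cannot recover this averaging effect; some version of the Federer selection is essential. Two smaller remarks: the paper passes to the centers $x_i$ themselves (and their Hausdorff limits, hence the $6\varepsilon$) rather than to points $y_j\in Z_{n,k}$, which is why it needs no modulus-of-continuity term and in particular no smallness hypothesis on $\varepsilon$ --- your requirement $(s+\delta)\gamma(\varepsilon)<\delta\underline{u}$ is an extra restriction the lemma does not allow. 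And the compactness-in-Hausdorff-distance step is what legitimizes taking $k\to\infty$ in the truncations $\mathcal{I}_{n,k}$; your proposal does not address the analogous passage from finite to countable families.
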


\begin{proof}Let $Z\subset X$, $s\geq 0$, $\varepsilon$, $\delta>0$. Taking $h=\mathcal{X}_Z$ and $c_i\equiv1$ in (\ref{wbs}), we see that
$$W(Z,s,\varepsilon,N)\leq M(Z,s,\varepsilon,N),$$
for each $N\in \mathbb{N}$.
In the following, we prove that $$M(Z,s+\delta,6\varepsilon,N)\leq W(Z,s,\varepsilon,N),$$ when $N$ is large enough.

Assume that $N>2$ such that $n^2\exp(-\underline{u}n\delta)\leq 1$ for $n\geq N$. Let $\{(B_{n_i}(x_i,\varepsilon),c_i)\}_{i\in \mathcal{I}}$ be a family so that $\mathcal{I}\subset \mathbb{N}$, $x_i\in X$, $0<c_i<\infty$, $n_i\geq N$ and
\begin{equation}\label{5.2.2}
\sum_ic_i\mathcal{X}_{B_i}\geq \mathcal{X}_Z,
\end{equation}
where $B_i=B_{n_i}(x_i,\varepsilon)$. We show below that
\begin{equation}\label{5.2.3}
M(Z,s+\delta,6\varepsilon,N)\leq \sum_{i\in \mathcal{I}}c_i\exp(-s\sum_{k=0}^{n_i-1}u(f^kx_i)),
\end{equation}
which implies $M(Z,s+\delta,6\varepsilon,N)\leq W(Z,s,\varepsilon,N)$.

Denote $\mathcal{I}_n=\{i\in \mathcal{I}:n_i=n\}$, $\mathcal{I}_{n,k}=\{i\in \mathcal{I}_n:i\leq k\}$ for $n\geq N$ and $k\in \mathbb{N}$.
Write for brevity $B_i=B_{n_i}(x_i,\varepsilon)$, and $5B_i=B_{n_i}(x_i,5\varepsilon)$ for $i\in \mathcal{I}$. We may assume $B_i\neq B_j$ for $i\neq j$. For $t>0$, set
\begin{eqnarray*}
% \nonumber to remove numbering (before each equation)
  Z_{n,\,t} &=& \{x\in Z :\sum_{i\in \mathcal{I}_n}c_i\mathcal{X}_{B_i}(x)>t\}, \quad \textrm{and} \\
  Z_{n,\,k,\,t} &=& \{x\in Z :\sum_{i\in \mathcal{I}_{n,\,k}}c_i\mathcal{X}_{B_i}(x)>t\}.
\end{eqnarray*}
We divide the proof of (\ref{5.2.3}) into the following three steps.

Step 1. For each $n\geq N$, $k\in\mathbb{N}$, and $t>0$, there exists a finite set $\mathcal{J}_{n,k,t}\subset \mathcal{I}_{n,k}$ such that the balls $B_i~(i\in \mathcal{J}_{n,k,t})$ are pairwise disjoint, $Z_{n,k,t}\subset \bigcup_{i\in\mathcal{J}_{n,k,t} }5B_i$ and
\begin{equation}
\sum_{i\in \mathcal{J}_{n,k,t}}\exp(-s\sum_{k=0}^{n-1}u(f^kx_i))\leq \frac{1}{t}\sum_{i\in \mathcal{I}_{n,k}}c_i\exp(-s\sum_{k=0}^{n-1}u(f^kx_i)).
\end{equation}

To prove the above result, we adopt the method of Federer ( \cite{Federer}, 2.10.24 ) used in the study of weighted Hausdorff measures ( see also Mattila \cite{Mattila}, Lemma 8.16 ). Since $\mathcal{I}_{n,k}$ is finite, by approximating the $c_i$'s from above, we may assume that each $c_i$ is a positive rational, and then multiplying with a common denominator. We may assume that each $c_i$ is a positive integer.
Let $m$ be the least integer with $m\geq t$. Denote $\mathcal{B}=\{B_i:i\in \mathcal{I}_{n,k}\}$ and define $v:\mathcal{B}\rightarrow \mathbb{Z}$ by $v(B_i)=c_i$.
We define by induction integer-valued function $v_0$, $v_1$, $\cdots$, $v_m$ on $\mathcal{B}$ and sub-families $\mathcal{B}_1$, $\cdots$, $\mathcal{B}_m$ of $\mathcal{B}$ starting with $v_0=v$. Using Lemma \ref{5ball} (in which we take the metric $d_n$ instead of $d$) we find a pairwise disjoint subfamily $\mathcal{B}_1$ of $\mathcal{B}$ such that $\bigcup_{B\in \mathcal{B}}B\subset \bigcup_{B\in \mathcal{B}_1}5B$, and hence $Z_{n,k,t}\subset \bigcup_{B\in \mathcal{B}_1}5B$. Then by repeatedly using Lemma \ref{5ball}, we can define inductively for $j=1,\cdots,m$, disjoint subfamilies $\mathcal{B}_j$ of $\mathcal{B}$, such that
\begin{equation}
\mathcal{B}_j\subset \{B\in \mathcal{B}:v_{j-1}(B)\geq 1\}, \quad Z_{n,k,t}\subset \bigcup_{B\in \mathcal{B}_j}5B,
\end{equation}
and the functions $v_j$ such that
\begin{equation}
v_j(B)=
\left\{
  \begin{array}{ll}
    v_{j-1}(B)-1, & \hbox{µ±$B\in \mathcal{B}_j$;} \\
   v_{j-1}(B), & \hbox{µ±$B\in \mathcal{B}\setminus\mathcal{B}_j$.}
  \end{array}
\right.
\end{equation}
For $j<m$, we have
$$Z_{n,k,t}\subset \{x:\sum_{B\in \mathcal{B}:B\ni x}v_j(B)\geq m-j\}.$$
Thus
\begin{equation}
\begin{split}
&\sum_{j=1}^{m}  \sum_{B\in \mathcal{B}_j}\exp(-s\sum_{k=0}^{n-1}u(f^kx_B))\\
=&\sum_{j=1}^{m}\sum_{B\in \mathcal{B}_j}(v_{(j-1)}(B)-v_j(B))\exp(-s\sum_{k=0}^{n-1}u(f^kx_B))\\
\leq&\sum_{B\in \mathcal{B}}\sum_{j=1}^{m}(v_{j-1}(B)-v_j(B))\exp(-s\sum_{k=0}^{n-1}u(f^kx_B))\\
\leq&\sum_{B\in \mathcal{B}}v(B)\exp(-s\sum_{k=0}^{n-1}u(f^kx_B))\\
=&\sum_{i\in \mathcal{I}_{n,k}}c_i\exp(-s\sum_{k=0}^{n-1}u(f^kx_i)).
\end{split}
\end{equation}
Choose $j_0\in\{1,\cdots,m\}$ so that $\sum_{B\in \mathcal{B}_{j_0}}\exp(-s\sum_{k=0}^{n-1}u(f^kx_B))$ is the smallest. Then
\begin{eqnarray*}
% \nonumber to remove numbering (before each equation)
  \sum_{B\in \mathcal{B}_{j_0}}\exp(-s\sum_{k=0}^{n-1}u(f^kx_B)) &=& \frac{1}{m}\sum_{i\in \mathcal{I}_{n,k}}c_i\exp(-s\sum_{k=0}^{n-1}u(f^kx_i)) \\
   &=& \frac{1}{t}\sum_{i\in \mathcal{I}_{n,k}}c_i\exp(-s\sum_{k=0}^{n-1}u(f^kx_i))
\end{eqnarray*}
Hence $\mathcal{J}_{n,k,t}=\{i\in \mathcal{I}:B_i\in \mathcal{B}_{j_0}\}$ is desired.

Step 2. For each $n\geq N$ and $t>0$, we have
\begin{equation}\label{5.2.11}
M(Z_{n,t},s+\delta,6\varepsilon,N)\leq \frac{1}{n^2t}\sum_{i\in \mathcal{I}_n}c_i\exp(-s\sum_{k=0}^{n-1}u(f^kx_i)).
\end{equation}
To see this, assume $Z_{n,t}\neq \emptyset$; otherwise there is nothing to prove. Since $Z_{n,k,t}\uparrow Z_{n,t}$, $Z_{n,k,t}\neq \emptyset$ when $k$ is large enough. Let $\mathcal{J}_{n,k,t}$ be the sets constructed in Step 1. Then $\mathcal{J}_{n,k,t}\neq \emptyset$ when $k$ is large enough. Define $E_{n,k,t}=\{x_i:i\in \mathcal{J}_{n,k,t}\}$. Note that the family of all non-empty subsets of $X$ is compact with respect to the Hausdorff distance (cf. Federer \cite{Federer}, 2.10.21). It follows that there is a subsequence $(k_j)$ of natural numbers and a non-empty compact set $E_{n,t}\subset X$ such that $E_{n,k_j,t}$ converges to $E_{n,t}$ in the Hausdorff distance as $j\rightarrow \infty$.
Since any two points in $E_{n,k,t}$ have a distance (with respect to $d_n$) not less than $\varepsilon$, so do the points in $E_{n,t}$. Thus $E_{n,t}$ is a finite set, moreover, $\#(E_{n,k_j,t})=\#(E_{n,t})$ when $j$ is large enough. Hence
\begin{equation}
\bigcup_{x\in E_{n,t}}B_n(x,5.5\varepsilon) \supset \bigcup_{x\in E_{n,k_j,t}}B_n(x,5\varepsilon)=\bigcup_{i\in \mathcal{J}_{n,k_j,t}}5B_i\supset Z_{n,k_j,t}.
\end{equation}
when $j$ is large enough,
and thus $\bigcup_{x\in E_{n,t}}B_n(x,6\varepsilon)\supset Z_{n,t}$. By the way, since $\#(E_{n,k_j,t})=\#(E_{n,t})$ when $j$ is large enough, we have $$\sum_{x\in E_{n,t}}\exp(-s\sum_{k=0}^{n-1}u(f^kx))\leq \frac{1}{t}\sum_{i\in \mathcal{I}_n}c_i\exp(-s\sum_{k=0}^{n-1}u(f^kx_i)).$$
This forces
\begin{equation}
\begin{split}
M(Z_{n,t},s+\delta,6\varepsilon,N)
&\leq \sum_{x\in E_{n,t}}\exp(-(s+\delta)\sum_{k=0}^{n-1}u(f^kx))\\
&\leq \frac{1}{\exp(\underline{u}n\delta)t}\sum_{i\in \mathcal{I}_n}c_i\exp(-s\sum_{k=0}^{n-1}u(f^kx_i))\\
&\leq \frac{1}{n^2t}\sum_{i\in \mathcal{I}_n}c_i\exp(-s\sum_{k=0}^{n-1}u(f^kx_i)).
\end{split}
\end{equation}
Step 3. For any $t\in (0,1)$, we have
$$M(Z,s+\delta,6\varepsilon,N)\leq \frac{1}{t}\sum_{i\in \mathcal{I}}c_i\exp(-s\sum_{k=0}^{n_i-1}u(f^kx_i)).$$
As a result, (\ref{5.2.3}) holds.
To see this, fix $t\in (0,1)$. Note that $\sum_{n=N}^{\infty}n^{-2}<1$. It follows that $Z\subset \bigcup_{n=N}^{\infty}Z_{n,n^{-2}t}$ from (\ref{5.2.2}). Hence by ($\ref{5.2.11}$) and $M(\cdot,s+\delta,6\varepsilon,N)$ is a outer measure, we have
\begin{equation}
\begin{split}
M(Z,s+\delta,6\varepsilon,N)
&\leq \sum_{n=N}^{\infty}M(Z_{n,n^{-2}t},s+\delta,6\varepsilon,N)\\
&\leq \sum_{n=N}^{\infty}\frac{1}{t}\sum_{i\in \mathcal{I}_n}c_i\exp(-s\sum_{k=0}^{n-1}u(f^kx_i))\\
&=\frac{1}{t}\sum_{i\in\mathcal{I}}c_i\exp(-s\sum_{k=0}^{n_i-1}u(f^kx_i)).
\end{split}
\end{equation}
which finishes the proof of the lemma.
\end{proof}

\section{BS Frostman's Lemma}\label{jiaquan BS dingyi}

To prove variational principle for BS-C dimension, we need the following dynamical BS Frostman's lemma.
\begin{lemma}\label{5.3.1}
Let $K$ be a non-empty compact subset of $X$. Let $s\geq 0 $, $N\in \mathbb{N}$ and $\varepsilon>0$. Suppose that $c=W(K,s,\varepsilon,N)>0$. Then there is a Borel probability measure $\mu$ on $X$ such that $\mu(K)=1$ and
\begin{equation}
\mu(B_n(x,\varepsilon))\leq \frac{1}{c}\exp(-s\sum_{k=0}^{n-1}u(f^kx)),\; \forall x\in X, \; n\geq N.
\end{equation}
\end{lemma}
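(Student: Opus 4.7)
The plan is to construct $\mu$ by a Hahn--Banach argument applied to a sublinear functional on $C(X)$ whose value at the constant $1$ equals one, mimicking Howroyd's approach to the classical Frostman lemma and its adaptation by Feng and Huang to Bowen entropy. First I would broaden $W(\cdot,s,\varepsilon,N)$ by allowing arbitrary bounded real-valued test functions on the right-hand side: for bounded $f:X\to\mathbb{R}$, set
\[
W'(f)=\inf\Bigl\{\sum_{i}c_{i}\exp\bigl(-s\textstyle\sum_{k=0}^{n_{i}-1}u(f^{k}x_{i})\bigr)\,:\,c_{i}\ge 0,\,n_{i}\ge N,\,x_{i}\in X,\,\sum_{i}c_{i}\mathcal{X}_{B_{n_{i}}(x_{i},\varepsilon)}\ge f\text{ on }K\Bigr\}
\]
and put $\Lambda(f)=W'(f)/c$. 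Concatenating two admissible families shows that $W'$ is subadditive on $C(X)$, rescaling weights shows that it is positively homogeneous, and taking a single ball with arbitrarily small positive weight shows $W'(f)=0$ whenever $f\le 0$ on $K$. Moreover $W'(1)=W(K,s,\varepsilon,N)=c$, so $\Lambda(1)=1$ and $\Lambda$ is sublinear on $C(X)$.

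Next I would apply Hahn--Banach to extend $L_{0}(\lambda)=\lambda$, defined on the one-dimensional subspace $\mathbb{R}\cdot 1\subset C(X)$, to a linear functional $L$ on $C(X)$ with $L\le\Lambda$. The hypothesis $L_{0}(\lambda)\le\Lambda(\lambda)$ holds trivially: for $\lambda\ge 0$ both sides equal $\lambda$, while for $\lambda<0$ one has $\Lambda(\lambda)=0>\lambda$. Applying $L\le\Lambda$ to $-f$ for $f\ge 0$ gives $L(f)\ge -\Lambda(-f)=0$, so $L$ is a positive linear functional with $L(1)=1$, and the Riesz representation theorem produces a Borel probability measure $\mu$ on $X$ with $L(g)=\int g\,d\mu$ and $\mu(X)=1$.

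Finally I would read off the two stated conclusions. For $\mu(K)=1$, invoke Urysohn to find, for each open $U\supset K$, some $f\in C(X)$ with $\mathcal{X}_{K}\le f\le\mathcal{X}_{U}$; then $1-f\le 0$ on $K$, so $\Lambda(1-f)=0$ and $L(f)=L(1)-L(1-f)\ge 1-\Lambda(1-f)=1$, giving $\mu(U)\ge L(f)\ge 1$. Outer regularity of $\mu$ on the compact metric space $X$ then forces $\mu(K)=1$. For the Bowen-ball bound, fix $x\in X$, $n\ge N$, and any compact $C\subset B_{n}(x,\varepsilon)$; Urysohn provides $g\in C(X)$ with $\mathcal{X}_{C}\le g\le\mathcal{X}_{B_{n}(x,\varepsilon)}$, and the single family $\{(B_{n}(x,\varepsilon),1)\}$ is admissible for $g$ on $K$, whence $\Lambda(g)\le c^{-1}\exp\bigl(-s\sum_{k=0}^{n-1}u(f^{k}x)\bigr)$; combining $\mu(C)\le L(g)\le\Lambda(g)$ and taking the supremum over compact $C\subset B_{n}(x,\varepsilon)$ via inner regularity delivers the claimed inequality.

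The real difficulty is the functional-analytic set-up: one must verify carefully that $W'$ is genuinely subadditive and positively homogeneous on $C(X)$ and that the sign conventions of Hahn--Banach yield a \emph{positive} extension $L$. Once those points are secured, the rest of the argument is standard Urysohn--Riesz--regularity bookkeeping together with the observation that any individual Bowen ball of length at least $N$ is itself an admissible one-element family for functions bounded above by its indicator.
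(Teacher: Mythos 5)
Your proof is correct and follows essentially the same route as the paper: you define the sublinear functional (your $\Lambda(f)=W'(f)/c$ is identical to the paper's $p(f)=(1/c)W(\mathcal{X}_K\cdot f,s,\varepsilon,N)$, since requiring the cover to dominate $f$ on $K$ is the same as dominating $\mathcal{X}_K f$ everywhere), extend by Hahn--Banach, pass to a measure by Riesz, and finish with Urysohn plus regularity. The only cosmetic difference is that you verify $\mu(K)=1$ by shrinking open neighborhoods of $K$ and invoking outer regularity, whereas the paper exhausts $X\setminus K$ by compact sets and uses inner regularity; these are the same argument.
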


\begin{proof}Clearly $c<\infty$. We define a function $p$ on the space $C(X)$ of continuous real-valued functions on $X$ by
\begin{equation}
p(f)=(1/c)W(\mathcal{X}_K\cdot f,s,\varepsilon,N),
\end{equation}
where $W(\cdot,s,\varepsilon,N)$ is defined as in (\ref{wbs}).

Let $\textbf{1}\in C(X)$ denote the constant function $\textbf{1}(x)\equiv 1$. It is easy to verify that

(1)$p(f+g)\leq p(f)+p(g)$, $\forall$ $f$, $g\in C(X)$.

(2)$p(tf)=tp(f)$, $\forall$ $t\geq 0$ and $f\in C(X)$.

(3)$p(\textbf{1})=1$, $0\leq p(f)\leq \|f\|_\infty$, $\forall$ $f\in C(X)$, and $p(g)=0$, for $g\in C(X)$, $g\leq 0$.\\
By the Hahn-Banach theorem, we can extend the linear functional $t\mapsto tp(1)$, $t\in \mathbb{R}$ from the subspace of the constant function to a linear functional $L:C(X)\rightarrow \mathbb{R}$ satisfying
\begin{equation*}
L(1)=p(1)=1,
\end{equation*}
\begin{equation*}
-p(-f)\leq L(f)\leq p(f),\; \forall\, f\in C(X).
\end{equation*}
If $f\in C(X)$ with $f\geq 0$, then $p(-f)=0$ and so $L(f)\geq 0$. Hence combining the fact $L(1)=1$, we can use the the Riesz representation theorem to find a Borel probability measure $\mu$ on $X$ such that
\begin{equation*}
L(f)=\int fd\mu, \;\forall \,f \in C(X).
\end{equation*}
Now we show that $\mu(K)=1$. To see this, for any compact set $E\subset X\setminus K$, by Uryson lemma there is $f\in C(X)$ such that $0\leq f\leq 1$, $f(x)=1$ for $x\in E$ and $f(x)=0$ for $x\in K$. Then $f \cdot \mathcal{X}_K\equiv 0$ and thus $p(f)=0$. Hence $\mu(E)\leq L(f)\leq p(f)=0$. This shows $\mu(X\setminus K)=0$, i.e. $\mu(K)=1$.

In the end, we show that
\begin{equation*}
\mu(B_n(x,\varepsilon))\leq \frac{1}{c}\exp(-s\sum_{k=0}^{n-1}u(f^kx)), \; \forall \,x\in X, n\geq N.
\end{equation*}
To see this, for any compact set $E\subset B_n(x,\varepsilon)$, by Uryson lemma, there exists $f\in C(X)$, such that $0\leq f\leq 1$, $f(y)=1$ for $y\in E$ and $f(y)=0$ for $y\in X\setminus B_n(x,\varepsilon)$. Then $\mu(E)\leq L(f)\leq p(f)$. Since $f\cdot \mathcal{X}_K\leq \mathcal{X}_{B_n(x,\varepsilon)}$ and $n\geq N$, we have
$$W(\mathcal{X}_K\cdot f,s,\varepsilon,N)\leq \exp(-s\sum_{k=0}^{n-1}u(f^kx)).$$
and thus $p(f)\leq \frac{1}{c}\exp(-s\sum_{k=0}^{n-1}u(f^kx))$. Therefore
$$\mu(E)\leq \frac{1}{c}\exp(-s\sum_{k=0}^{n-1}u(f^kx)).$$
It follows that
\begin{equation}
\begin{split}
\mu(B_n(x,\varepsilon))&=\sup\{\mu(E):E \textrm{~is a compact subset of~} B_n(x,\varepsilon)\}\\
&\leq \frac{1}{c}\exp(-s\sum_{k=0}^{n-1}u(f^kx)).
\end{split}
\end{equation}
\end{proof}

\section{Variational Principle for BS-C Dimension}
Before we discuss the variational principle, we consider the relation between ``local'' dimension and ``global'' dimension.
\begin{theorem}
Let $\mu$ be a Borel probability measure on $X$, $E$ be a Borel subset of $X$ and $0<s<\infty$.

(1)If $\underline{P}_{\mu}(x)\leq s$ for all $x\in E$, then $\dim_{\rm{BSC}}E\leq s$.

(2)If $\underline{P}_{\mu}(x)\geq s$ for all $x\in E$ and $\mu(E)>0$, then $\dim_{\rm{BSC}}E\geq s$.
\end{theorem}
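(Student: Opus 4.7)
My plan is to handle the two directions separately. For Part (1), I will build an efficient cover of $E$ via a Vitali-type selection of Bowen balls on which $\mu$ is \emph{thick}. For Part (2), I will invoke a mass-distribution argument on a subset of $E$ of positive $\mu$-measure on which $\mu$ is uniformly \emph{thin} on Bowen balls.

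For Part (1), I fix $\delta>0$ and a small $\varepsilon>0$. Since $\underline{P}_{\mu}(f, x, \varepsilon)$ is nondecreasing as $\varepsilon$ decreases, the hypothesis forces $\underline{P}_{\mu}(f, x, \varepsilon) \leq s$ for every $x\in E$, so for each $x\in E$ and each $N\in\mathbb{N}$ one can choose $n(x, N) \geq N$ with
\[
\mu(B_{n(x,N)}(x, \varepsilon)) > \exp\!\Big(-(s+\delta)\sum_{k=0}^{n(x,N)-1} u(f^k x)\Big).
\]
I then apply a Vitali-type selection to $\{B_{n(x,N)}(x, \varepsilon) : x \in E\}$: since $d_n \leq d_m$ for $n \leq m$, whenever $B_n(x, \varepsilon)\cap B_m(y, \varepsilon) \neq \emptyset$ with $n \leq m$ one has $B_m(y, \varepsilon) \subset B_n(x, 3\varepsilon)$. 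A greedy maximal disjoint subfamily ordered by increasing length produces a countable disjoint collection $\{B_{n_i}(x_i, \varepsilon)\}$ with $x_i \in E$, $n_i \geq N$, and $E \subset \bigcup_i B_{n_i}(x_i, 3\varepsilon)$. Since $u(B_{n_i}(x_i, 3\varepsilon)) \geq \sum_{k=0}^{n_i-1} u(f^k x_i)$ and the inner balls are disjoint,
\[
\sum_i \exp\bigl(-(s+\delta)\, u(B_{n_i}(x_i, 3\varepsilon))\bigr) \leq \sum_i \mu(B_{n_i}(x_i, \varepsilon)) \leq 1.
\]
Hence $M(E, s+\delta, 3\varepsilon, N) \leq 1$ for all $N$, so $m(E, s+\delta, 3\varepsilon) \leq 1 < \infty$ and $\dim_{\rm{BSC}}(E, 3\varepsilon) \leq s+\delta$. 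Letting $\varepsilon \to 0$ and then $\delta \to 0$ yields $\dim_{\rm{BSC}} E \leq s$.

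For Part (2), I fix $\delta \in (0, s)$. The sets $E_k = \{x \in E : \underline{P}_{\mu}(f, x, 1/k) \geq s-\delta\}$ increase to $E$ by monotonicity, so some $k_0$ has $\mu(E_{k_0}) > 0$. The sets
\[
F_N = \Big\{ x \in E_{k_0} : \mu(B_n(x, 1/k_0)) < \exp\!\Big(-(s-2\delta)\sum_{j=0}^{n-1} u(f^j x)\Big) \text{ for all } n \geq N \Big\}
\]
increase to $E_{k_0}$, so some $N_0$ satisfies $\mu(F_{N_0}) > 0$. Fix $\varepsilon \in (0, 1/(2k_0))$. For any cover $\mathcal{G} \subset \bigcup_{n \geq N_0} \mathcal{W}_n(\varepsilon)$ of $F_{N_0}$ I discard balls missing $F_{N_0}$; for each remaining $B = B_{n_B}(y_B, \varepsilon)$ pick $z_B \in B \cap F_{N_0}$. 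Then $B \subset B_{n_B}(z_B, 2\varepsilon) \subset B_{n_B}(z_B, 1/k_0)$, so $\mu(B) < \exp(-(s-2\delta)\sum_{j=0}^{n_B-1} u(f^j z_B))$. Uniform continuity of $u$ (with modulus $\gamma(\cdot)$) and $\sum_{j=0}^{n_B-1} u(f^j z_B) \geq n_B\underline{u}$ give $\sum_{j=0}^{n_B-1} u(f^j z_B) \geq u(B)/(1 + \gamma(2\varepsilon)/\underline{u})$, hence $\mu(B) < \exp(-\alpha(\varepsilon)\, u(B))$ with $\alpha(\varepsilon) = (s-2\delta)/(1+\gamma(2\varepsilon)/\underline{u})$. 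Summing,
\[
0 < \mu(F_{N_0}) \leq \sum_{B \in \mathcal{G}} \mu(B) \leq \sum_{B \in \mathcal{G}} \exp(-\alpha(\varepsilon)\, u(B)),
\]
so $M(F_{N_0}, \alpha(\varepsilon), \varepsilon, N_0) \geq \mu(F_{N_0}) > 0$, yielding $\dim_{\rm{BSC}}(F_{N_0}, \varepsilon) \geq \alpha(\varepsilon)$. Letting $\varepsilon \to 0$, $\delta \to 0$, and using $\dim_{\rm{BSC}} E \geq \dim_{\rm{BSC}} F_{N_0}$, I conclude $\dim_{\rm{BSC}} E \geq s$.

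The main obstacle is the Vitali-type covering in Part (1): the standard $5R$-lemma (Lemma \ref{5ball}) is formulated in a single metric, while the Bowen balls selected above live in different metrics $d_n$. I bypass this by combining the monotonicity $d_n \leq d_m$ for $n \leq m$ with the resulting containment $B_m(y, \varepsilon) \subset B_n(x, 3\varepsilon)$ and a greedy selection ordered by increasing length. The other delicate point, appearing in Part (2), is relating $u(B)$ to $\sum_j u(f^j z_B)$ when $z_B$ is not the center of $B$; uniform continuity of $u$ together with the lower bound $\underline{u} > 0$ handles this cleanly.
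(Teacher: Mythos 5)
Your proposal is correct and follows the same overall strategy as the paper: a Vitali-type selection of Bowen balls on which $\mu$ is thick for the upper bound, and a mass-distribution argument on a positive-measure set where $\mu$ is uniformly thin for the lower bound. That said, you execute both halves more carefully than the paper does. In Part (1) you observe that $\underline{P}_{\mu}(f,x,\varepsilon)$ is nondecreasing as $\varepsilon\to 0$, so the hypothesis already gives $\underline{P}_{\mu}(f,x,\varepsilon)\le s$ for \emph{every} $\varepsilon>0$; this makes the paper's decomposition $E=\bigcup_k E_k$ redundant (indeed every $E_k$ equals $E$ under the stated hypothesis), and you simply omit it. You are also explicit about the covering lemma for Bowen balls of different lengths: Lemma~\ref{5ball} is formulated for balls in a single metric, while the family here mixes metrics $d_n$ with different $n$; your greedy selection ordered by increasing length, together with the containment $B_m(y,\varepsilon)\subset B_n(x,3\varepsilon)$ for intersecting balls with $n\le m$, closes this gap that the paper passes over in silence. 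In Part (2) you make explicit the step the paper elides: after picking $z_B\in B\cap F_{N_0}$, the estimate $\mu(B)<\exp(-(s-2\delta)\sum_{j}u(f^jz_B))$ must still be converted into a bound of the form $\exp(-\alpha\, u(B))$, since the cover cost in $M(\cdot)$ is defined via $u(B)=\sup_{z\in B}\sum u(f^jz)$. Your bound $\sum_j u(f^jz_B)\ge u(B)/(1+\gamma(2\varepsilon)/\underline{u})$, using uniform continuity of $u$ together with $\underline{u}>0$, supplies exactly this conversion; the paper's chain of inequalities concludes $M(E^*,s-r,\varepsilon/2,N)\ge\mu(E^*)$ directly from an estimate involving the off-center values $\sum u(f^lx_i)$, which by itself only bounds the cost in the wrong direction and implicitly requires the same uniform-continuity correction you carry out. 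Both proofs reach the same conclusion, but your write-up is tighter and patches two small but genuine gaps in the paper's argument.
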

\begin{proof}
(1)For a fixed $r>0$, since $\underline{P}_{\mu}(x)\leq s$ for all $x\in E$, we have $E=\bigcup_{k=1}^{\infty}E_k$, where
\begin{equation}
E_k=\{x\in E:\liminf_{n\rightarrow \infty}\frac{-\log\mu(B_n(x,\varepsilon))}{\sum_{l=0}^{n-1}u(f^lx)}<s+r, \quad \forall \varepsilon\in (0,\frac{1}{k})\}.
\end{equation}
Now fix $k\geq 1$ and $0<\varepsilon<\frac{1}{5k}$. For each $x\in E_k$, there exists a strictly increasing sequence $\{n_j(x)\}_{j=1}^{\infty}$ such that
\begin{equation*}
\mu(B_{n_j(x)}(x,\varepsilon))\geq \exp(-(s+r)\sum_{l=0}^{n_j-1}(u(f^lx))), \;\forall \,j\geq 1.
\end{equation*}
So for any $N\geq 1$, the set $E_k$ is contained in the union of the sets in the family
$$\mathcal{F}=\{B_{n_j(x)}(x,\varepsilon):x\in E_k,n_j(x)\geq N\}.$$
By Lemma \ref{5ball}, there exists a sub family
$$\mathcal{G}=\{B_{n_i}(x_i,\varepsilon)\}_{i\in I}\subset \mathcal{F},$$ consisting of disjoint balls such that
\begin{equation*}
E_k\subset \bigcup_{i\in I}B_{n_i}(x_i,5\varepsilon),
\end{equation*}
and
\begin{equation*}
\mu(B_{n_i}(x_i,\varepsilon))\geq \exp(-(s+r)\sum_{l=0}^{n_i-1}u(f^lx_i)), \forall i\in I.
\end{equation*}
The index set $I$ is at most countable since $\mu$ is a probability measure and $\mathcal{G}$ is a disjoint family of sets, each of which has positive $\mu$-measure. Therefore, $\{B_{n_i}(x_i,5\varepsilon)\}$ is a covering of $E_k$, and consequently
\begin{equation}
M(E_k,s+r,5\varepsilon,N)\leq \sum_{i\in I}\exp(-(s+r)\sum_{l=0}^{n_i-1}u(f^lx_i))\leq \sum_{i\in I}\mu (B_{n_i}(x_i, \varepsilon))\leq 1,
\end{equation}
where the disjointness of $\{B_{n_i}(x_i,\varepsilon)\}_{i\in I}$ is used in the last inequality. It follows that
\begin{equation}
M(E_k,s+r,5\varepsilon)=\lim_{N\rightarrow \infty}M(E_k,s+r,5\varepsilon,N)\leq 1.
\end{equation}
which implies that $\dim_{\rm{BSC}}(E_k,5\varepsilon)\leq s+r$ for any $0<\varepsilon<\frac{1}{5k}$. Letting $\varepsilon\rightarrow 0$ yields
\begin{equation}
\dim_{\rm{BSC}}E_k\leq s+r,\; \forall\, k\geq 1.
\end{equation}
Since BS-C dimension is countably stable, it follows that
\begin{equation}
\dim_{\rm{BSC}}E=\dim_{\rm{BSC}}(\bigcup_{k=1}^{\infty}E_k)=\sup_{k\geq 1}\{\dim_{\rm{BSC}}E_k\}\leq s+r.
\end{equation}
Therefore, $\dim_{\rm{BSC}}E\leq s$ since $r>0$ is arbitrary.

(2)Fix $r>0$, for each $k\geq 1$, put
\begin{equation}
E_k=\{x\in E: \liminf_{n\rightarrow \infty}\frac{-\log \mu(B_n(x,\varepsilon))}{\sum_{l=0}^{n-1}u(f^lx)}>s-\varepsilon, \; \forall \varepsilon\in (0,\frac{1}{k})\}.
\end{equation}
Since $\underline{P}_{\mu}(x)\geq s$ for all $x\in E$, the sequence $\{E_k\}_{k=1}^\infty$ increases to $E$. So by the continuity of the measure (\cite{Mattila}), we have
\begin{equation}
\lim_{k\rightarrow \infty}\mu(E_k)=\mu(E)>0.
\end{equation}
Then fix some $k\geq 1$ with $\mu(E_k)>\frac{1}{2}\mu(E)$. For each $N\geq 1$, put
\begin{equation}
E_{k,N}=\{x\in E_k:\frac{-\log\mu(B_n(x,\varepsilon))}{\sum_{l=0}^{n-1}u(f^lx)}>s-r, \; \forall\, n\geq N, \varepsilon\in (0,\frac{1}{k})\}.
\end{equation}
Since the sequence $\{E_{k,N}\}_{N=1}^{\infty}$ increases to $E_k$, we may pick an $N^*\geq 1$ such that $\mu(E_{k,N^*})>\frac{1}{2}\mu(E_k)$. Write $E^*=E_{k,N^*}$ and $\varepsilon^*=\frac{1}{k}$. Then $\mu(E^*)>0$, and
\begin{equation}\label{5.3.14}
\mu(B_n(x,\varepsilon))\leq \exp(-(s-r)\sum_{l=0}^{n-1}u(f^lx)), \quad \forall x\in E^*, 0<\varepsilon\leq \varepsilon^*, n\geq N^*.
\end{equation}
Now suppose that $\mathcal{F}=\{B_{n_i}(y_i,\frac{\varepsilon}{2})\}_{i\geq 1}$ is a covering of $E^*$ such that
\begin{equation}
E^*\cap B_{n_i}(y_i,\frac{\varepsilon}{2})\neq \emptyset, \; n_i\geq N\geq N^*,   \forall\, i\geq 1, 0<\varepsilon\leq \varepsilon^*.
\end{equation}
For each $i\geq 1$, there exists an $x_i\in E^*\cap B_{n_i}(y_i,\frac{\varepsilon}{2})$. By the triangle inequality
\begin{equation}
B_{n_i}(y_i,\frac{\varepsilon}{2})\subset B_{n_i}(x_i,\varepsilon).
\end{equation}
In combination with (\ref{5.3.14}), this implies
\begin{equation}
\sum_{i\geq 1}\exp(-(s-r)\sum_{l=0}^{n_i-1}u(f^lx_i))\geq \sum_{i\geq 1}\mu(B_{n_i}(x_i,\varepsilon))\geq \mu(E^*).
\end{equation}
Therefore,
\begin{equation*}
M(E^*,s-r,\varepsilon/2,N)\geq \mu(E^*)>0,\; \forall \,N>N^*,
\end{equation*}
and consequently
\begin{equation}
M(E^*,s-r,\varepsilon/2)=\lim_{N \rightarrow \infty}M(E^*,s-r,\varepsilon/2,N)\geq \mu(E*)>0.
\end{equation}
which implies that $\dim_{\rm{BSC}}(E^*,\varepsilon/2)\geq s-r$. Then we have $\dim_{\rm{BSC}}E^*\geq s-r$ by letting $\varepsilon\rightarrow 0$. It following that
$\dim_{\rm{BSC}}E\geq \dim_{\rm{BSC}}E^*\geq s-r$, and hence $\dim_{\rm{BSC}}E\geq s$ since $r>0$ is arbitrary.
\end{proof}
\begin{theorem}
Let $(X,f)$ be a TDS, if $K\subset X$ is non-empty and compact, then
\begin{equation}
\dim_{\rm{BSC}}K=\sup\{\underline{P}_{\mu}(f): \mu\in M(X),\mu(K)=1\}.
\end{equation}
\end{theorem}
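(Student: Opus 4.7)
The plan is to prove both inequalities separately. The ``$\geq$'' direction follows quickly from the preceding local-to-global theorem, while the ``$\leq$'' direction is the main assertion and draws on Lemma \ref{5.2.1} (the equivalence $\dim_{\rm{BSC}}=\dim^{WBS}$) together with the BS Frostman Lemma \ref{5.3.1}.

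For ``$\geq$'', I would fix any $\mu\in M(X)$ with $\mu(K)=1$ and any $s<\underline{P}_\mu(f)=\int\underline{P}_\mu(f,x)\,d\mu(x)$. The function $x\mapsto\underline{P}_\mu(f,x)$ is Borel measurable (it is the monotone limit in $\varepsilon$ of liminfs of the Borel functions $x\mapsto\mu(B_n(x,\varepsilon))$, which are lower semicontinuous in $x$). Hence $E:=K\cap\{x:\underline{P}_\mu(f,x)\geq s\}$ is a Borel set with $\mu(E)>0$, since otherwise $\underline{P}_\mu(f,x)<s$ would hold $\mu$-almost everywhere and force $\int\underline{P}_\mu(f,x)\,d\mu(x)<s$. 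Part (2) of the preceding theorem then gives $\dim_{\rm{BSC}}E\geq s$, and the monotonicity of $\dim_{\rm{BSC}}$ under inclusion upgrades this to $\dim_{\rm{BSC}}K\geq s$. Letting $s\uparrow\underline{P}_\mu(f)$ and taking the supremum over admissible $\mu$ completes this direction.

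For ``$\leq$'', fix $s<\dim_{\rm{BSC}}K$. By Lemma \ref{5.2.1}, $s<\dim^{WBS}K$, so $W(K,s)=\lim_{\varepsilon\to 0}W(K,s,\varepsilon)=\infty$. Pick $\varepsilon_0>0$ small enough that $W(K,s,\varepsilon_0)>0$, and then $N_0$ so that $c:=W(K,s,\varepsilon_0,N_0)>0$. Lemma \ref{5.3.1} supplies a Borel probability measure $\mu$ with $\mu(K)=1$ and
$$\mu(B_n(x,\varepsilon_0))\leq c^{-1}\exp\Bigl(-s\sum_{i=0}^{n-1}u(f^ix)\Bigr),\quad\forall\,x\in X,\ n\geq N_0.$$
Taking negative logarithms, dividing by $\sum_{i=0}^{n-1}u(f^ix)\geq n\underline{u}\to\infty$, and passing to the liminf in $n$ yields
$$\liminf_{n\to\infty}\frac{-\log\mu(B_n(x,\varepsilon_0))}{\sum_{i=0}^{n-1}u(f^ix)}\geq s\qquad\text{for every }x\in X.$$
Because this inner liminf is monotone non-increasing in $\varepsilon$, the limit $\underline{P}_\mu(f,x)$ is in fact the supremum over $\varepsilon>0$ and hence dominates its value at $\varepsilon_0$. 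Thus $\underline{P}_\mu(f,x)\geq s$ for every $x\in X$, so $\underline{P}_\mu(f)\geq s$. Letting $s\uparrow\dim_{\rm{BSC}}K$ finishes the proof.

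The key conceptual step is recognizing that a single Frostman scale $\varepsilon_0$ is enough: because $\underline{P}_\mu(f,x)$ is obtained as a \emph{supremum} over $\varepsilon$ (the inner liminf grows as $\varepsilon$ shrinks), a lower bound at any one scale is inherited by the limit. This avoids the delicate diagonal weak-$*$ construction across $\varepsilon\to 0$ that was needed in Part 2 of the BS-P variational theorem, so the present argument is essentially bookkeeping once Lemmas \ref{5.2.1} and \ref{5.3.1} are in hand. The only secondary point to check is the Borel measurability of $E$, which reduces to the lower semicontinuity of $x\mapsto\mu(B_n(x,\varepsilon))$ together with standard stability of Borel functions under liminf and monotone limits.
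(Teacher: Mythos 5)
Your proof is correct, and the upper-bound direction matches the paper's argument exactly: use Lemma \ref{5.2.1} to pass to $\dim^{WBS}$, pick a single scale $\varepsilon_0$ and $N_0$ with $c=W(K,s,\varepsilon_0,N_0)>0$, invoke Lemma \ref{5.3.1}, take logarithms, and observe that the liminf is monotone in $\varepsilon$. Your remark that one Frostman scale suffices is accurate, but note the paper's own proof of this theorem already works at a single fixed $\varepsilon$; the diagonal weak-$*$ construction you mention appears only in the BS-P theorem, not here.

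Where you genuinely diverge is the lower bound. The paper does \emph{not} invoke the preceding local-to-global theorem: it first reduces via the monotone convergence theorem to bounding $\int \underline{P}_{\mu}(f,x,\varepsilon)\,d\mu$ for a fixed $\varepsilon$, truncates at a level $u_l$ slightly below this integral, extracts a positive-measure Borel set $A_l$ on which the measure decay holds uniformly from some $N$ on, and then runs an explicit covering argument (with the $\gamma(\varepsilon)$ and $1-\gamma(\varepsilon)/\underline{u}$ correction for passing from the center $x_i$ to a nearby point $y_i\in K\cap A_l$). Your approach instead restricts to $E=K\cap\{x:\underline{P}_{\mu}(f,x)\geq s\}$, checks $\mu(E)>0$ (the strict-inequality integral argument is fine: a nonnegative integrand strictly below $s$ a.e. on a probability space integrates to strictly below $s$), applies part (2) of the preceding local-to-global theorem, and uses monotonicity of $\dim_{\rm{BSC}}$ under inclusion. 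This is cleaner and avoids repeating the covering estimate, at the modest cost of having to note the Borel measurability of $x\mapsto\underline{P}_{\mu}(f,x)$, which you handle via lower semicontinuity of $x\mapsto\mu(B_n(x,\varepsilon))$; the paper silently assumes this measurability anyway when defining $\underline{P}_{\mu}(f)$. Both routes are valid; yours is arguably the more natural one given that the local-to-global theorem is proved immediately before.
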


\begin{proof}
We first show that $\dim_{\rm{BSC}}(K)\geq \underline{P}_{\mu}(f)$ for any $\mu\in M(X)$ with $\mu(K)=1$. Let $\mu$ be a given such measure.
For $x\in X$, $n\in \mathbb{N}$ and $\varepsilon>0$, we write
$$\underline{P}_{\mu}(f,x,\varepsilon)=\liminf_{n\rightarrow \infty}-\frac{\log\mu(B_n(x,\varepsilon))}{\sum_{i=0}^{n-1}u(f^ix)}.$$

Clearly, $\underline{P}_{\mu}(f,x,\varepsilon)$ is nonnegative and increases as $\varepsilon$ decreases. Hence by the monotone convergence theorem,
\begin{equation}
\lim_{\varepsilon\rightarrow 0}\int \underline{P}_{\mu}(f,x,\varepsilon)d\mu=\int \underline{P}_{\mu}(f,x)d\mu=\underline{P}_{\mu}(f).
\end{equation}
Thus to show $\dim_{\rm{BSC}}K\geq \underline{P}_{\mu}(f)$, it is sufficient to show
$$\dim_{\rm{BSC}}K\geq \int \underline{P}_{\mu}(f,x,\varepsilon)d\mu,$$
for each $\varepsilon>0$.

Fix $\varepsilon>0$ and $l\in \mathbb{N}$. Denote
\begin{equation}
\gamma(\varepsilon)=\sup\{|u(x)-u(y)|: d(x,y)<2\varepsilon \},
\end{equation}
and
\begin{equation}
u_l=\min\{l,\int \underline{P}_{\mu}(f,x,\varepsilon)d\mu(x)-\frac{1}{l}\}.
\end{equation}
Then there exist a Borel set $A_l\subset X$ with $\mu(A_l)>0$ and $N\in \mathbb{N}$ such that
\begin{equation}\label{5.3.20}
\mu(B_n(x,\varepsilon))\leq \exp(-u_l\sum_{k=0}^{n-1}u(f^kx)), \; x\in A_l, \; n\geq N.
\end{equation}
Now let $\{B_{n_i}(x_i,\varepsilon/2)\}$ be a countable or finite family so that $x_i\in X$, $n_i\geq N$ and
$\bigcup_iB_{n_i}(x_i,\varepsilon/2)\supset K\cap A_l.$
We may assume that for each $i$, $B_{n_i}(x_i,\varepsilon)\cap(K\cap A_l)\neq \emptyset$, and choose $y_i\in  B_{n_i}(x_i,\varepsilon/2)\cap (K\cap A_l)$,
Then by (\ref{5.3.20}),
\begin{equation}
\begin{split}
\sum_{i}\exp(-u_l\sum_{k=0}^{n_i-1}u(f^kx_i)(1-\frac{\gamma(\varepsilon)}{\underline{u}}))
&\geq \sum_{i}\exp(-u_l\sum_{k=0}^{n_i-1}u(f^kx_i)+u_l\gamma(\varepsilon)n_i)\\
&\geq \sum_{i}\exp(-u_l\sum_{k=0}^{n_i-1}u(f^ky_i))\\
&\geq \sum_i\mu(B_{n_i}(y_i,\varepsilon))\\
&\geq \sum_i\mu(B_{n_i}(x_i,\varepsilon/2))\\
&\geq \mu(K\cap A_l)=\mu(A_l)>0.
\end{split}
\end{equation}
It follows that
$$M(K,u_l(1-\frac{\gamma(\varepsilon)}{\underline{u}}),\varepsilon/2,N)\geq M(K\cap A_l,u_l(1-\frac{\gamma(\varepsilon)}{\underline{u}}),\varepsilon/2,N)\geq \mu(A_l).$$
Therefore
$$\dim_{\rm{BSC}}K\geq u_l(1-\frac{\gamma(\varepsilon)}{\underline{u}}).$$
Letting $l\rightarrow \infty$, we have
$$\dim_{\rm{BSC}}K\geq \int \underline{P}_{\mu}(f,x,\varepsilon)(1-\frac{\gamma(\varepsilon)}{\underline{u}}) d\mu.$$ Hence
$$\dim_{\rm{BSC}}K\geq\underline{P}_{\mu}(f).$$

We next show that
$$\dim_{\rm{BSC}}K\leq \sup\{\underline{P}_{\mu}(f):\mu\in M(X),\mu(K)=1\}.$$
 We can assume $\dim_{\rm{BSC}}K>0$; otherwise we have nothing to prove. By Lemma \ref{5.2.1}, $\dim_{\rm{BSC}}K=\dim^{WBS}K$. Let $0<s<\dim^{WBS}K$. Then there exist $\varepsilon>0$ and $N\in \mathbb{N}$ such that
 $$c=W(K,s,\varepsilon,N)>0.$$
  By Lemma \ref{5.3.1}, there exists $\mu \in M(X)$ with $\mu(K)=1$ such that
\begin{equation}
  \mu(B_n(x,\varepsilon))\leq \frac{1}{c}\exp(-s\sum_{k=0}^{n-1}u(f^ks)), \;\forall \,x\in X, n\geq N.
\end{equation}
  Clearly
  $\underline{P}_{\mu}(f,x)\geq \underline{P}_{\mu}(f,x,\varepsilon)\geq s$
 for each $x\in X$ and hence
  $$\underline{P}_{\mu}(f)\geq \int \underline{P}_{\mu}(f,x)d\mu(x)\geq s.$$
This finishes the proof.
\end{proof}

%% The Appendices part is started with the command \appendix;
%% appendix sections are then done as normal sections
%% \appendix

%% \section{}
%% \label{}

%% References
%%
%% Following citation commands can be used in the body text:
%% Usage of \cite is as follows:
%%   \cite{key}         ==>>  [#]
%%   \cite[chap. 2]{key} ==>> [#, chap. 2]
%%

%% References with bibTeX database:

\end{document}